

\documentclass[final,1p,times]{elsarticle}


\usepackage{amsthm,amsmath,amssymb,amsfonts,graphicx,graphics,latexsym,exscale,cmmib57,dsfont,bbm,amscd,ulem}
\usepackage{bbm}
\usepackage{mathrsfs}
\usepackage{color,soul}
\usepackage[colorlinks=true]{hyperref}



\frenchspacing
\newcommand{\bP}{\boldsymbol{P}}
\newcommand{\p}{\boldsymbol{p}}
\newcommand{\bdelta}{\boldsymbol{\delta}}
\newcommand{\A}{\boldsymbol{A}}

\newcommand{\bS}{\boldsymbol{S}}

\newcommand{\bxi}{\boldsymbol{\xi}}

\newcommand{\bea}{\begin{eqnarray}}
\newcommand{\eea}{\end{eqnarray}}
\newcommand{\bean}{\begin{eqnarray*}}
\newcommand{\eean}{\end{eqnarray*}}
\newtheorem{thm}{Theorem}[section]

\newtheorem*{que}{Question}

\newtheorem{lemma}[thm]{Lemma}
\newtheorem{cor}[thm]{Corollary}

\theoremstyle{definition}

\numberwithin{equation}{section}

\journal{xxx}

\begin{document}

\begin{frontmatter}

\title{Exponential stability of nonhomogeneous matrix-valued Markovian chains}

\author[D]{Xiongping Dai}
\ead{xpdai@nju.edu.cn}
\author[TH]{Tingwen Huang}
\ead{tingwen.huang@qatar.tamu.edu}
\author[YH]{Yu Huang}
\ead{stshyu@mail.sysu.edu.cn}
\address[D]{Department of Mathematics, Nanjing University, Nanjing 210093, People's Republic of China}
\address[TH]{Department of Mathematics, Texas A$\&$M University at Qatar, PO Box 23874, Doha, Qatar}
\address[YH]{Department of Mathematics, Zhongshan (Sun Yat-Sen) University, Guangzhou 510275, People's Republic of China}
\begin{abstract}
Let $\bxi=\{\xi_n\}_{n\ge0}$ be a nonhomogeneous/nonstationary Marvovian chain on a probability space $(\Omega,\mathscr{F},\mathbb{P})$ valued in the state space $\bS$ that consists of a finite number of real $d$-by-$d$ matrices such that $\mathbb{P}(\{\xi_0=S\})>0$ for each $S\in\bS$. As usual, $\bxi$ is called \textit{uniformly exponentially stable} if there exist two constants $C>0$ and $0<\lambda<1$ so that for all $n\ge1$, $\|\xi_0(\omega)\xi_1(\omega)\dotsm\xi_{n-1}(\omega)\|\le C\lambda^{n}$ for $\mathbb{P}\textrm{-a.e. }\omega\in\Omega$.
In this note, we show that if the Markovian transition probability matrices of $\bxi$ have the same transition sign matrix for all times $n\ge0$, then $\bxi$ is uniformly exponentially stable if and only if there are $\gamma<1$ and $N>0$ such that for each $n>N$, the spectral radii $\rho(S_{i_0}\dotsm S_{i_{n-1}})$ are less than or equal to $\gamma$ for all $n$-length closed sample paths $(S_{i_0},\dotsm, S_{i_{n-1}})\in\bS^n$ with $\mathbb{P}(\{\xi_0=S_{i_0},\dotsc,\xi_{n-1}=S_{i_{n-1}},\xi_n=S_{i_0}\})>0$.
\end{abstract}

\begin{keyword}
Matrix-valued Markovian chain\sep exponential stability\sep $\{0,1\}$-matrix lift of matrix-valued topological Markov chain.

\medskip
\MSC[2010] Primary 60J10; Secondary 37H05.
\end{keyword}

\end{frontmatter}

\section{Introduction}\label{sec1}%
Let $(\Omega,\mathscr{F},\mathbb{P})$ be a probability space, and let $\bS=\{S_1,\dotsc,S_K\}$ be a finite subset of the real $d$-by-$d$ matrix space $\mathbb{R}^{d\times d}$ endowed with the discrete topology, where $2\le K, d<\infty$ both are integers. We consider a nonhomogeneous matrix-valued Markovian chain
\begin{equation*}
\bxi=\{\xi_n\}_{n\ge0},\quad \textrm{where }\xi_n\colon\Omega\rightarrow\bS\textrm{ are random variables}.
\end{equation*}
Here the nonhomogeneity means that the Markovian transition probability matrices $\{\bP(n)\}_{n\ge0}$ of $\bxi$ are time-varying. Write
\begin{equation*}
\bP(n)=\left(p_{ij}(n)\right)_{1\le i,j\le K}\in\mathbb{R}^{K\times K},
\end{equation*}
where
\begin{equation*}
p_{ij}(n)=P(\{\xi_{n+1}=S_j|\xi_n=S_i\})\quad\textrm{if }\mathbb{P}(\{\xi_n=S_i\})\not=0.
\end{equation*}
For $\mathbb{P}$-a.e. $\omega\in\Omega$, the stability problem of the infinite-length sample paths
$\bxi(\omega)=\{\xi_n(\omega)\}_{n\ge0}$ is an important issue in both pure and applied mathematics.
The purpose of this note is just to characterize when $\bxi$ is exponentially stable at $\mathbb{P}$-a.e. $\omega\in\Omega$.

\medskip
\textbf{Notations}: By $\rho(A)$ is meant the spectral radius of a square matrix $A$, which is defined as the maximum of the absolute values of all the eigenvalues of $A$.

For $n\ge1$, an $n$-length sample path $(S_{i_0},\dotsc,S_{i_{n-1}})\in\bS^n$ of $\bxi$ is said to be \textit{non-ignorable} provided that $\mathbb{P}(\{\xi_0=S_{i_0},\dotsc,\xi_{n-1}=S_{i_{n-1}}\})>0$.

For any finite-length sample path $(S_{i_0},\dotsc,S_{i_{n-1}})$ in $\bS^n$, it is called a \textit{non-ignorable closed sample path} of $\bxi$, if the $(n+1)$-length sample path $(S_{i_0},\dotsc,S_{i_{n-1}},S_{i_0})$ in $\bS^{n+1}$ is non-ignorable for $\bxi$, i.e. $\mathbb{P}(\{\xi_0=S_{i_0},\dotsc,\xi_{n-1}=S_{i_{n-1}},\xi_n=S_{i_0}\})>0$. For a non-ignorable closed sample path $(S_{i_0},\dotsc,S_{i_{n-1}})$, it may be extended into a non-ignorable \textit{periodic path} of $\bxi$ under additional condition like having constant transition sign matrix defined below:
$$(\overbrace{S_{i_0},\dotsc,S_{i_{n-1}}},\overbrace{S_{i_0},\dotsc,S_{i_{n-1}}},\overbrace{S_{i_0},\dotsc,S_{i_{n-1}}},\dotsc).$$
So a closed sample path of $\bxi$ is also called a periodic sample path of $\bxi$.

Let $\p^{(0)}=\left(p_1^{(0)},\dotsc,p_K^{(0)}\right)$ be the initial probability vector of $\bxi$, i.e. $p_k^{(0)}=\mathbb{P}(\{\xi_0=S_k\})$ for $1\le k\le K$. We say $\p^{(0)}$ is \textit{irreducible} if $p_k^{(0)}>0$ for all $1\le k\le K$.

Since on most occasions we will not matter the explicit value of $\bP(n)$, we now introduce an essential condition as follows. Let $\mathrm{sign}(x)$ stand for the sign function; i.e., $\mathrm{sign}(x)=1$ if $x>0$ and $=0$ if $x\le0$. We say that the Markovian chain $\bxi$ has the constant transition sign matrix if
the $K$-by-$K$ $\{0,1\}$-matrices
\begin{equation*}
\mathbb{S}=(s_{ij})\equiv\big{(}\mathrm{sign}(p_{ij}(n))\big{)}_{1\le i,j\le K}\quad \forall n\ge0,
\end{equation*}
are independent of the times $n$. That means that for any two states $S_i,S_j\in\bS$, if the transition probability $p_{ij}(0)$ of $\bxi$ from the state $S_i$ to state $S_j$ at time $n=0$ in one unit time is strictly positive, then the transition probability $p_{ij}(n)>0$ at all other times $n\ge1$.

Here $\mathbb{S}$ is called the \textit{transition sign matrix} of $\bxi$.

\subsection{Motivations}\label{sec1.1}
We note that for the dynamical behaviors of a Markovian chain $\bxi$, ones are only interested to $\mathbb{P}$-almost every sample points $\omega\in\Omega$, neither every nor an explicit sample point $\omega$ in $\Omega$.

For example, let $K=2, d=1$, $\p^{(0)}=(1/2,1/2)$ and
$$
\mathbb{S}=\left(\begin{matrix}0&1\\1&0\end{matrix}\right)\quad \textrm{and}\quad S_1=2,\ S_2=\frac{1}{3}.
$$
Then although $\bS=\{S_1,S_2\}\subset\mathbb{R}^{1\times1}$ is not stable itself because $S_1$ is of unstable mode, yet for $\mathbb{P}$-a.e. $\omega\in\Omega$ we have
$$(\xi_n(\omega))_{n\ge0}=\textrm{either }(2,\frac{1}{3},2,\frac{1}{3},2,\frac{1}{3},\dotsc)\,\textrm{ or }\,(\frac{1}{3},2,\frac{1}{3},2,\frac{1}{3},2,\dotsc);$$
hence $\bxi$ is uniformly exponentially stable for $\mathbb{P}$-a.e. $\omega\in\Omega$, but not for every $\omega\in\Omega$. In this case, the stability is completely determined by the two non-ignorable closed sample paths $(S_1,S_2)$ and $(S_2,S_1)$ with $\mathbb{P}(\{\xi_0=S_1,\xi_1=S_2\})=\frac{1}{2}$ and $\mathbb{P}(\{\xi_0=S_2,\xi_1=S_1\})=\frac{1}{2}$.

For any sample point $\omega\in\Omega$, if $\|\xi_0(\omega)\dotsm\xi_n(\omega)\|$ converges to $0$ as $n\to\infty$, then we say $\bxi$ is stable at the sample point $\omega$. Since this involves computing the limit of $\xi_0(\omega)\dotsm\xi_n(\omega)$ as $n\to\infty$, it is not an easy task to judge the stability of $\bxi$. However, if the infinite sample path $(\xi_n(\omega))_{n\ge0}$ is periodically generated by a finite-length sample path, say $(S_{j_0},\dotsc,S_{j_{m-1}})\in\bS^m$, i.e., $$(\xi_n(\omega))_{n\ge0}=(\overbrace{S_{j_0},\dotsc,S_{j_{m-1}}},\overbrace{S_{j_0},\dotsc,S_{j_{m-1}}},\dotsc),$$
then $\bxi$ is stable at $\omega$ if and only if $\rho(S_{j_0},\dotsc,S_{j_{m-1}})<1$.

The question arises immediately of \textit{whether one can judge the stability of $\bxi$ via the non-ignorable periodic elements of the Markovian chain $\bxi$, in general}.

This problem has been raised and studied by many peoples since 1980s; cf.~\cite{PR,Gur,LM,BTT,SWMWK} and so on. We shall positively answer this problem in this note.
\subsection{Main statements}\label{sec1.2}
In this note, we shall obtain the following sufficient and necessary condition for the uniform exponential stability of $\bxi$.

\begin{thm}\label{thm1}
Let $\bxi$ be a nonhomogeneous matrix-valued Markovian chain, which has the constant transition sign matrix and irreducible initial probability vector. Then the following three statements are equivalent to each other.
\begin{enumerate}
\item[$(1)$] $\bxi$ is uniformly exponentially stable.
\item[$(2)$] There are constants $\gamma<1$ and $N>0$ such that for each $n>N$,
\begin{equation*}
\rho(S_{i_0}\dotsm S_{i_{n-1}})\le\gamma
\end{equation*}
for all non-ignorable $n$-length sample paths $(S_{i_0},\dotsm, S_{i_{n-1}})\in\bS^n$.
\item[$(3)$] There are constants $\gamma<1$ and $N>0$ such that for each $n>N$,
\begin{equation*}
\rho(S_{i_0}\dotsm S_{i_{n-1}})\le\gamma
\end{equation*}
for all non-ignorable $n$-length closed sample paths $(S_{i_0},\dotsm, S_{i_{n-1}})\in\bS^n$.
\end{enumerate}
\end{thm}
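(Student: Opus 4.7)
The plan is to treat the three implications $(1)\Rightarrow(2)\Rightarrow(3)\Rightarrow(1)$ in order of increasing difficulty. For $(1)\Rightarrow(2)$, given any non-ignorable word $(S_{i_0},\dotsc,S_{i_{n-1}})$ the cylinder $\{\xi_0=S_{i_0},\dotsc,\xi_{n-1}=S_{i_{n-1}}\}$ has positive probability and therefore meets the $\mathbb{P}$-conull set on which the uniform bound $\|\xi_0\dotsm\xi_{n-1}\|\le C\lambda^n$ holds; hence $\rho(S_{i_0}\dotsm S_{i_{n-1}})\le\|S_{i_0}\dotsm S_{i_{n-1}}\|\le C\lambda^n$, which lies below any chosen $\gamma\in(\lambda,1)$ once $n$ is sufficiently large. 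The implication $(2)\Rightarrow(3)$ is immediate, since every non-ignorable closed path is a fortiori non-ignorable.

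For $(3)\Rightarrow(1)$ I would first recode the problem as a matrix-valued topological Markov chain. Because the transition sign matrix $\mathbb{S}$ is constant and the initial distribution $\p^{(0)}$ is irreducible, a finite word $(S_{i_0},\dotsc,S_{i_{n-1}})$ is non-ignorable for $\bxi$ precisely when it is admissible in the subshift of finite type $\Sigma_{\mathbb{S}}$ determined by $\mathbb{S}$, and non-ignorable closed paths become exactly the periodic admissible words. So $(1)$ is the statement that the constrained joint spectral radius of $\bS$ along $\Sigma_{\mathbb{S}}$ is strictly less than $1$, while $(3)$ is a uniform-in-$n$ upper bound on the spectral radii of all sufficiently long periodic admissible words.

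The technical engine I would then invoke is a Berger--Wang type identity for matrix-valued topological Markov chains, obtained through the $\{0,1\}$-matrix lift flagged in the keywords. I would first decompose $\Sigma_{\mathbb{S}}$ according to the strongly connected components of the graph of $\mathbb{S}$ and reduce to one transitive component at a time; inside such a component one lifts $\bS$ to a finite family of block operators on $(\mathbb{R}^d)^K$ whose unconstrained products encode the admissible products of the $S_i$, so that classical joint-spectral-radius machinery (extremal norms, Berger--Wang) becomes available. The Berger--Wang identity on the lift ties joint spectral radius to spectral radii along periodic orbits, and combined with a closing-lemma argument that extends any long admissible word to a periodic one by a uniformly short return path inside its component, it converts the hypothesis $\rho\le\gamma<1$ from $(3)$ into strict exponential decay of all admissible products, which is $(1)$.

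The main obstacle sits at this last passage: a bare application of Berger--Wang only yields constrained joint spectral radius $\le 1$, so the crux is to exclude the borderline case. Here one must exploit both that $\gamma$ is uniform in $n$ (not merely $\gamma^{1/n}$) and that each transitive component of $\Sigma_{\mathbb{S}}$ has a finite closing diameter, so that norm estimates along arbitrary admissible words can be reduced to spectral-radius estimates along periodic ones via extremal norms on the lift. This is precisely where the hypothesis of constant transition sign matrix together with the irreducibility of $\p^{(0)}$ plays its role, by identifying non-ignorable paths with the SFT admissible paths to which the lift machinery applies.
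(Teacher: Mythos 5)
Your overall architecture coincides with the paper's: the two easy implications are handled the same way, and for $(3)\Rightarrow(1)$ you, like the paper, recode $\bxi$ as a matrix-valued topological Markov chain over the subshift $\varSigma_{\mathbb{S}}^+$ (this is the content of Lemmas~2.1--2.4 and Theorem~2.6) and then pass to a $\{0,1\}$-matrix lift so that unconstrained joint-spectral-radius machinery becomes available. You also correctly locate the crux: a bare Berger--Wang argument gives only $\rho(\bS,\mathbb{S})\le 1$, and the uniformity of $\gamma$ in $n$ must be exploited to exclude the borderline case.

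However, the mechanism you propose for that crucial step has a gap. You want to control arbitrary long admissible words by closed ones via a closing lemma ``inside a strongly connected component,'' after decomposing $\varSigma_{\mathbb{S}}^+$ into transitive pieces. But an admissible word of a subshift of finite type need not stay inside one strongly connected component: it can descend through several components of the transition graph, and such a word admits no return path at all, so it cannot be closed up, short or otherwise. Hypothesis $(3)$ says nothing directly about these words, and your sketch does not explain how they are handled (one would need a block-triangularization argument showing that the finitely many inter-component transitions contribute only a bounded factor; this is not automatic). The paper sidesteps the issue entirely with one observation your proposal is missing (Corollary~3.3): for the Kozyakin lift $S^{(k)}=\mathbb{S}^{(k)}\otimes S_k$, any word that is \emph{not} periodically extendable --- in particular any word crossing components --- satisfies $\rho\bigl(S^{(i_0)}\dotsm S^{(i_{n-1})}\bigr)=0$, because appending $i_0$ yields a non-admissible word and hence the zero product (Theorem~3.1). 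Combined with Theorem~3.2, hypothesis $(3)$ therefore bounds the spectral radius of \emph{every} unconstrained product of lifted matrices of length $>N$ by $\gamma$, and the Shih--Wu--Pang criterion for free products (Lemma~4.3) then gives $\|S^{(i_0)}\dotsm S^{(i_{n-1})}\|\le C\lambda^n$, which descends to $\rho(\bS,\mathbb{S})<1$ and hence to $(1)$ via Lemma~4.2. If you add this vanishing observation to your argument you can delete both the component decomposition and the closing lemma; without it, the component-crossing words are a genuine hole in the passage from $(3)$ to $(1)$.
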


This characterizes the uniform exponential stability.
Here the uniform exponential stability of $\bxi$ is defined in the same way as in the Abstract; that is to say, there exist two universal constants $C>0$ and $0<\lambda<1$ such that
\begin{equation*}
\|\xi_0(\omega)\dotsm\xi_{n-1}(\omega)\|\le C\lambda^{n}\quad\forall n\ge1\textrm{ and }\mathbb{P}\textrm{-a.e. }\omega\in\Omega.
\end{equation*}
This means that the sequence of product matrices $\xi_0(\omega)\dotsm\xi_n(\omega)$ converges exponentially fast to $0$ uniformly for $\mathbb{P}$-a.e. $\omega\in\Omega$.

Weakly, if there holds that the so-called Lyapunov exponents
\begin{equation*}
    \lambda_{\bxi}(\omega):=\lim_{n\to\infty} \frac{1}{n}\log\|\xi_0(\omega)\dotsm\xi_{n-1}(\omega)\|<0,\quad \mathbb{P}\textrm{-a.e. }\omega\in\Omega,
\end{equation*}
then $\bxi$ is referred to as \textit{exponentially stable $\mathbb{P}$-almost surely}. In this weak case, for $\mathbb{P}$-a.e. $\omega\in\Omega$, $\xi_0(\omega)\dotsm\xi_n(\omega)$ also converges exponentially fast to $0$, but not necessarily uniformly for $\mathbb{P}$-a.e. $\omega\in\Omega$.

The Markovian chain $\bxi$ is said to be \textit{irreducible}\footnote{This is different from the \textit{irreducibility of $\bS$} that says there exists no a nontrivial, common, invariant, proper subspace of $\mathbb{R}^d$ for each member $S_k$ of $\bS$.} if its transition probability matrices $\bP(n)$ are irreducible. This is equivalent to say that the transition sign matrix $\mathbb{S}$ of $\bxi$ is irreducible; i.e., for any pair $1\le i,j\le K$, there is some integer $N=N_{ij}>0$ such that the $(i,j)$-entry of the $N$-folds product $\mathbb{S}^N$ is strictly positive.

Then we shall obtain the following sufficient condition for the $\mathbb{P}$-almost sure exponential stability of $\bxi$.

\begin{thm}\label{thm2}
Let $\bxi$ be a nonhomogeneous matrix-valued Markovian chain, which has the irreducible constant transition sign matrix $\mathbb{S}$ and irreducible initial probability $\p^{(0)}$. Then $\bxi$ is exponentially stable $\mathbb{P}$-almost surely, if for any $n\ge1$,
\begin{equation*}
\rho(S_{i_0}\dotsm S_{i_{n-1}})<1
\end{equation*}
for all non-ignorable $n$-length closed sample paths $(S_{i_0},\dotsm, S_{i_{n-1}})\in\bS^n$.
\end{thm}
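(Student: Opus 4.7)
The plan is to reduce the problem to a spectral analysis on the one-sided subshift of finite type $\Sigma^+_\mathbb{S}$ determined by the constant sign matrix, combining Markov recurrence (given by the irreducibility of $\mathbb{S}$ and $\p^{(0)}$) with a periodic-orbit approximation on $\Sigma^+_\mathbb{S}$ to convert the per-cycle spectral bound into almost-sure Lyapunov negativity.

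\textbf{Coding, recurrence, excursion decomposition.} Using the irreducibility of $\p^{(0)}$, code $\mathbb{P}$-a.e.\ sample path $(\xi_n(\omega))_{n\ge 0}$ into an admissible infinite sequence $c(\omega)\in\Sigma^+_\mathbb{S}$. The irreducibility of $\mathbb{S}$, combined with the nonhomogeneous Markov property and strict positivity of every non-ignorable transition, yields $\mathbb{P}$-a.s.\ infinite recurrence to every state. Fix a reference state $S_1$ and (after using the strong Markov property to reduce to $\xi_0(\omega)=S_1$) let $0=\tau_0<\tau_1(\omega)<\tau_2(\omega)<\dotsm$ be the return times of the sample path to $S_1$. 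Setting $M_j(\omega)=\xi_{\tau_{j-1}(\omega)}(\omega)\dotsm\xi_{\tau_j(\omega)-1}(\omega)$, one has
\begin{equation*}
\xi_0(\omega)\dotsm\xi_{\tau_k(\omega)-1}(\omega)=M_1(\omega)\dotsm M_k(\omega),
\end{equation*}
and this composite is itself the matrix of a non-ignorable closed sample path based at $S_1$ of length $\tau_k(\omega)$. By the hypothesis of Theorem \ref{thm2}, $\rho(M_1(\omega)\dotsm M_k(\omega))<1$ for every $k\ge 1$ and $\mathbb{P}$-a.e.\ $\omega$.

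\textbf{From per-cycle spectrum to Lyapunov exponent.} I would upgrade the per-$k$ spectral bound to exponential norm decay as follows: $\mathbb{P}$-a.s.\ accumulation points of the empirical measures $\frac{1}{n}\sum_{i<n}\delta_{\sigma^ic(\omega)}$ are shift-invariant Borel probability measures $\mu_\omega$ on $\Sigma^+_\mathbb{S}$; by Kingman / Oseledets, the pointwise Lyapunov exponent $\lambda_\bxi(\omega)$ is computable via an integral against $\mu_\omega$; by a Katok-type periodic approximation on the finite-alphabet irreducible SFT $\Sigma^+_\mathbb{S}$, the top Lyapunov exponent of any ergodic invariant measure is bounded above by $\sup_c |c|^{-1}\log\rho(S_c)$, where $c$ ranges over non-ignorable closed sample paths; finally, upper semicontinuity of the top Lyapunov exponent on the compact space of shift-invariant measures, combined with the per-cycle strict inequality, would force a uniform strictly negative upper bound, yielding $\lambda_\bxi(\omega)<0$ $\mathbb{P}$-a.s.

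\textbf{Main obstacle.} The critical step is the uniformization: the bare hypothesis $\rho(S_c)<1$ for each individual closed sample path does not, on its own, imply $\sup_c|c|^{-1}\log\rho(S_c)<0$ (this supremum may well equal $0$ if $\rho(S_c)^{1/|c|}\to 1$ along a sequence of longer and longer cycles), so the passage is genuinely probabilistic rather than topological. Converting the compactness-plus-semicontinuity argument into a \emph{strict} negative bound will require the full combinatorial structure of $\Sigma^+_\mathbb{S}$: the irreducibility assumption must be used to force every invariant measure charged by the limiting empirical distribution to be ``non-degenerate'' in a way that excludes the asymptotically neutral periodic orbits at which the supremum above would otherwise be approached. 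This is where the irreducibility of $\mathbb{S}$ and of $\p^{(0)}$ enter essentially, and is the heart of the proof.
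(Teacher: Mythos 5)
There is a genuine gap, and it is exactly the one your final paragraph concedes. The entire difficulty of Theorem~\ref{thm2} is the passage from ``every non-ignorable closed sample path has $\rho<1$'' to ``$\lambda_{\bxi}(\omega)<0$ a.s.''; your proposed route (empirical measures, Kingman/Oseledets, Katok-type periodic approximation, upper semicontinuity) can only ever deliver the bound $\lambda_{\bxi}(\omega)\le\sup_{c}|c|^{-1}\log\rho(S_c)$, and as you yourself note this supremum may equal $0$ under the stated hypothesis. No mechanism is offered to convert this into a strict inequality, and irreducibility of $\mathbb{S}$ cannot do it for you: an irreducible SFT can perfectly well carry invariant measures approached by longer and longer cycles with $\rho(S_c)^{1/|c|}\to 1$, and nothing in your argument rules out that the limiting empirical measures sit on such data. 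The paper closes this gap by an entirely different device. It first replaces $\mathbb{P}^{\pi_{\bxi}}$ by an \emph{equivalent stationary ergodic} Markov measure $\mu_{\p,\bP}$ built from a Perron--Frobenius vector of an irreducible stochastic matrix with sign pattern $\mathbb{S}$ (Lemma~\ref{lem4.4}); it then applies the Kozyakin $\{0,1\}$-matrix lift, under which every \emph{free} word in the lifted alphabet $\mathbb{S}\otimes\bS$ has spectral radius $<1$ (non-admissible words give the zero matrix by Theorem~\ref{thm3.1}, admissible ones inherit the hypothesis by Theorem~\ref{thm3.2} and Corollary~\ref{cor3.3}); and finally it invokes Lemma~\ref{lem4.5} (the Dai--Huang--Xiao theorem), which is precisely the nontrivial statement that periodic stability of an unconstrained product implies a.s.\ negative Lyapunov exponent with respect to a stationary Markov measure. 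Your sketch contains no substitute for that lemma, so the heart of the proof is missing.

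A secondary but real error is the recurrence claim underlying your excursion decomposition. For a \emph{nonhomogeneous} chain the positive entries $p_{ij}(n)$ may decay rapidly in $n$ (only their signs are constant), so by Borel--Cantelli a given state can be visited only finitely often with positive probability even though $\mathbb{S}$ is irreducible; hence the return times $\tau_k(\omega)$ to a fixed reference state need not be a.s.\ finite, and the time-homogeneous strong Markov property you invoke to recenter at $\xi_0=S_1$ is not available in this setting. The paper sidesteps recurrence entirely: it only needs that $\mathbb{P}^{\pi_{\bxi}}$ and the stationary measure $\mu_{\p,\bP}$ have the same null sets, which follows from comparing signs of cylinder probabilities.
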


These two theorems show that the non-ignorable closed sample paths or periodic elements of $\bxi$ may completely decide the exponential stability of the nonstationary random process $\bxi$.

\subsection{Outline}\label{sec1.3}
Let $\bS^{\mathbb{Z}_+}=\left\{(S_{i_n})_{n=0}^{\infty}\,|\,1\le i_n\le K, S_{i_n}\in\bS\right\}$ be endowed with the standard compact product topology. Then
\begin{equation*}
\Theta_+\colon\bS^{\mathbb{Z}_+}\rightarrow\bS^{\mathbb{Z}_+}\quad \textrm{by }(S_{i_n})_{n=0}^{\infty}\mapsto(S_{i_{n+1}})_{n=0}^{\infty}
\end{equation*}
is a continuous surjective shift transformation. By the random variable
\begin{equation*}
\Pi_{\bxi}\colon\Omega\rightarrow\bS^{\mathbb{Z}_+};\quad \omega\mapsto(\xi_n(\omega))_{n=0}^\infty,
\end{equation*}
we can obtain the probability distribution $\mathbb{P}^{\Pi_{\bxi}}$ on the Borel measurable space $(\bS^{\mathbb{Z}_+},\mathscr{B}_{\bS^{\mathbb{Z}_+}})$ defined by
\begin{equation*}
\mathbb{P}^{\Pi_{\bxi}}([S_{j_0},\dotsc,S_{j_{m-1}}])=\mathbb{P}(\{\xi_0=S_{j_0},\dotsc,\xi_{m-1}=S_{j_{m-1}}\})
\end{equation*}
for any $m$-length blocks $[S_{j_0},\dotsc,S_{j_{m-1}}]=\{(S_{i_n})_{n=0}^\infty\,|\,S_{i_0}=S_{j_0},\dotsc,S_{i_{m-1}}=S_{j_{m-1}}\}$, for any $m\ge1$. Then there follows the claim:
\begin{itemize}
\item The Markovian chain $\bxi$ is uniformly exponentially stable if and only if there are constants $C>0$ and $\gamma<1$ such that
\begin{equation*}
\|S_{i_0}\dotsm S_{i_{m-1}}\|\le C\lambda^{m},\quad\forall m\ge1,\ \mathbb{P}^{\Pi_{\bxi}}\textrm{-a.e. }(S_{i_n})_{n=0}^\infty\in\bS^{\mathbb{Z}_+}.
\end{equation*}
\end{itemize}
If $\bxi$ is homogeneous and stationary, then $\mathbb{P}^{\Pi_{\bxi}}$ is invariant left by $\Theta_+$, i.e. $\mathbb{P}^{\Pi_{\bxi}}=\mathbb{P}^{\Pi_{\bxi}}\circ\Theta_+^{-1}$; in other words,
\begin{equation*}
\mathbb{P}^{\Pi_{\bxi}}([S_{j_0},\dotsc,S_{j_{m-1}}])=\sum_{1\le i_0\le K}\mathbb{P}^{\Pi_{\bxi}}([S_{i_0},S_{j_0},\dotsc,S_{j_{m-1}}]).
\end{equation*}
In our present situation, however, $\mathbb{P}^{\Pi_{\bxi}}$ is not $\Theta_+$-invariant. This in turn suggests that the classical ergodic theory and the Oselede\v{c} multiplicative ergodic theorem cannot play directly a role in proving the above theorems.

Two prominent tools in the analysis of stability of matrix random products without any constraints are the so-called Barabanov norm~\cite{Bar} and Elsner reduction theorem~\cite{Els}. However, in our situation, there are no analogs of Barabanov's norm and Elsner's reduction theorem.

To get around these points mentioned above, we shall construct a stationary canonical Markovian probability measure which is equivalent to $\mathbb{P}^{\Pi_{\bxi}}$. Particularly, to prove the Gel'fand-Berger-Wang formula of a topological Markovian chain \cite{Dai-pre}, Kozyakin has recently introduced a new approach---$\{0,1\}$-matrix lift of a matrix-valued topological Markovian chain in \cite{Koz-pre}. His idea makes us to overcome the obstructions caused by lacking Barabanov's norm and Elsner's reduction theorem.

The rest of this note is organized as follows. In Section~\ref{sec2}, we shall equivalently formulate our statements in terms of of matrix-valued topological Markovian chain. This is very convenient for us to employ some known criteria of stability established for linear switched dynamical systems. In Section~\ref{sec3}, we shall introduce Kozyakin's $\{0,1\}$-matrix lift of a matrix-valued topological Markovian chain. It builds up for us a bridge between a matrix-valued topological Markovian chain and the case of completely free product of matrices. In Section~\ref{sec4}, we will complete the proofs of our main theorems using Kozyakin's idea combining with some stability criteria for periodically stable linear switched systems. Finally we will end this note with concluding remarks in Section~\ref{sec5}.

\section{Matrix-valued topological Markovian chain}\label{sec2}
To prove our Theorems~\ref{thm1} and \ref{thm2}, we need to describe them in terms of matrix-valued topological Markovian chains.
Throughout this section, let $\bxi$ be a nonhomogeneous matrix-valued Markovian chain defined on $(\Omega,\mathscr{F},\mathbb{P})$ valued in the state space $\bS=\{S_1,\dotsc,S_K\}$ as in the Theorems~\ref{thm1} and \ref{thm2} stated in Section~\ref{sec1}.

Let $\mathbb{S}=(s_{k\ell})_{1\le k,\ell\le K}$ be the $K\times K$ constant transition sign matrix of $\bxi$. By the definition of $\mathbb{S}$, there is at least one entry $1$ at each row of $\mathbb{S}$, and it gives rise to a subshift of finite type as follows:

Let $\varSigma_{\mathbb{S}}^+=\left\{(i_n)_{n=0}^\infty\colon i_n\in\{1,\dotsc,K\}\textrm{ and }s_{i_ni_{n+1}}=1\textrm{ for all }n\ge0\right\}$,
which is nonempty and compact as a subspace of the compact product topological space $\varSigma_K^+=\{1,\dotsc,K\}^{\mathbb{Z}_+}$. Then there is the natural Markovian shift transformation of finite type
\begin{equation*}
\theta_+\colon\varSigma_{\mathbb{S}}^+\rightarrow\varSigma_{\mathbb{S}}^+;\quad (i_n)_{n=0}^\infty\mapsto(i_{n+1})_{n=0}^\infty.
\end{equation*}
For any $n$-length word $(i_0,\dotsc,i_{n-1})\in\{1,\dotsc,K\}^n$ where $n\ge2$, it is called \textit{$\mathbb{S}$-admissible}, if $s_{i_ki_{k+1}}=1$ for all $0\le k<n-1$.

Then there holds the following basic result:

\begin{lemma}\label{lem2.1}
Given any $n$-length word $(i_0,\dotsc,i_{n-1})\in\{1,\dotsc,K\}^n$ where $n\ge2$, it is $\mathbb{S}$-admissible if and only if the event
$\left\{\xi_0=S_{i_0},\dotsc,\bxi_{n-1}=S_{i_{n-1}}\right\}$
is non-ignorable for $\bxi$.
\end{lemma}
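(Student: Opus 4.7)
The plan is to expand the joint probability $\mathbb{P}(\{\xi_0 = S_{i_0},\dotsc,\xi_{n-1}=S_{i_{n-1}}\})$ by the Markov property as a product of the initial probability and successive one-step transition probabilities, and then identify positivity of that product with $\mathbb{S}$-admissibility term by term. Concretely, first I would write
\begin{equation*}
\mathbb{P}(\{\xi_0=S_{i_0},\dotsc,\xi_{n-1}=S_{i_{n-1}}\})=p_{i_0}^{(0)}\cdot p_{i_0i_1}(0)\cdot p_{i_1i_2}(1)\dotsm p_{i_{n-2}i_{n-1}}(n-2),
\end{equation*}
which follows inductively from the Markov property together with the definition of the conditional transition probabilities $p_{ij}(m)$; one should verify at each induction step that the previously conditioned event has positive probability, so that the conditional probabilities are well-defined (which is automatic once we restrict to cases where the partial product so far is positive).

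Next I would argue the two directions of the equivalence from this factorization. For the forward direction, if $(i_0,\dotsc,i_{n-1})$ is $\mathbb{S}$-admissible, then $s_{i_ki_{k+1}}=1$ for $0\le k<n-1$, hence by the constant transition sign matrix assumption $p_{i_ki_{k+1}}(k)>0$ for every such $k$; since the initial distribution $\p^{(0)}$ is assumed irreducible, $p_{i_0}^{(0)}>0$ as well, so each factor in the product above is strictly positive and the event is non-ignorable. For the converse, if the event is non-ignorable, then every factor in the factorization must be strictly positive (a product of nonnegative numbers is positive iff each factor is), and in particular $p_{i_ki_{k+1}}(k)>0$ for each $k$, which by constancy of the transition sign matrix forces $s_{i_ki_{k+1}}=\mathrm{sign}(p_{i_ki_{k+1}}(k))=1$, so the word is $\mathbb{S}$-admissible.

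There isn't really a hard step here; the lemma is essentially a bookkeeping statement that translates the probabilistic condition into the symbolic/combinatorial one. The only subtle point is making sure the Markov chain rule is applied rigorously in the nonhomogeneous setting—each conditional transition $p_{i_ki_{k+1}}(k)$ is defined only when the conditioning event $\{\xi_k=S_{i_k}\}$ has positive probability, so the cleanest approach is to prove the factorization by induction on $n$, stopping the induction as soon as one factor vanishes (in which case both the joint probability and some transition are zero, and the equivalence holds trivially via the contrapositive). Both hypotheses of the theorem, irreducibility of $\p^{(0)}$ and the constant transition sign matrix, are used in an essential way, and this lemma is what will allow us in the sequel to replace the probabilistic description of the Markovian chain by the purely symbolic subshift $(\varSigma_{\mathbb{S}}^+,\theta_+)$.
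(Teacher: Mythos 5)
Your proposal is correct and follows essentially the same route as the paper: factor the joint probability as $p_{i_0}^{(0)}p_{i_0i_1}(0)\dotsm p_{i_{n-2}i_{n-1}}(n-2)$ via the Markov property, then use irreducibility of $\p^{(0)}$ and the constant transition sign matrix to identify positivity of the product with $\mathbb{S}$-admissibility. Your treatment is in fact slightly more careful than the paper's (which states the factorization without comment on the well-definedness of the conditionals), but the argument is the same.
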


\begin{proof}
We first note that for any $n$-length word $(i_0,\dotsc,i_{n-1})\in\{1,\dotsc,K\}^n$ where $n\ge2$,
\begin{equation*}
\mathbb{P}(\{\xi_0=S_{i_0},\dotsc,\xi_{n-1}=S_{i_{n-1}}\})=p_{i_0}^{(0)}p_{i_0i_1}(0)\dotsm p_{i_{n-2}i_{n-1}}(n-2),
\end{equation*}
where $\p^{(0)}=\left(p_1^{(0)},\dotsc,p_K^{(0)}\right)$ is the initial probability vector of $\bxi$ as in Section~\ref{sec1}. Since $\p^{(0)}$ is irreducible, $(i_0,\dotsc,i_{n-1})$ is $\mathbb{S}$-admissible if and only if $p_{i_0i_1}(0)\dotsm p_{i_{n-2}i_{n-1}}(n-2)>0$.

This completes the proof of Lemma~\ref{lem2.1}.
\end{proof}

Recall that $(i_0,\dotsc,i_{n-1})\in\{1,\dotsc,K\}^n$ for $n\ge1$ is said to be \textit{$\mathbb{S}$-periodically extendable} if it is $\mathbb{S}$-admissible and in addition $s_{i_{n-1}i_0}=1$. This means that the periodic sequence of period $n$ $(i_0,\dotsc,i_{n-1},i_0,\dotsc,i_{n-1},\dotsc)$ belongs to $\varSigma_{\mathbb{S}}^+$.

By Lemma~\ref{lem2.1}, we can easily obtain the following result:

\begin{lemma}\label{lem2.2}
Given any $n$-length word $(i_0,\dotsc,i_{n-1})\in\{1,\dotsc,K\}^n$ for $n\ge1$, it is $\mathbb{S}$-periodically extendable if and only if $(S_{i_0},\dotsc,S_{i_{n-1}})\in\bS^n$
is a non-ignorable closed sample path of $\bxi$.
\end{lemma}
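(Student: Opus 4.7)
The plan is to reduce Lemma~\ref{lem2.2} directly to Lemma~\ref{lem2.1} by considering the $(n+1)$-length word obtained by appending $i_0$ to the given word. Concretely, I would apply Lemma~\ref{lem2.1} to the word $(i_0,\dotsc,i_{n-1},i_0)\in\{1,\dotsc,K\}^{n+1}$: this word is $\mathbb{S}$-admissible if and only if the event $\{\xi_0=S_{i_0},\dotsc,\xi_{n-1}=S_{i_{n-1}},\xi_n=S_{i_0}\}$ is non-ignorable, which is by definition the condition that $(S_{i_0},\dotsc,S_{i_{n-1}})$ is a non-ignorable closed sample path.

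It then remains only to unfold the definition of $\mathbb{S}$-admissibility for $(i_0,\dotsc,i_{n-1},i_0)$ and match it against the definition of $\mathbb{S}$-periodic extendability for $(i_0,\dotsc,i_{n-1})$. Saying $s_{i_k i_{k+1}}=1$ for every $0\le k<n$ with $i_n:=i_0$ is exactly the conjunction of $s_{i_k i_{k+1}}=1$ for $0\le k<n-1$ (which is $\mathbb{S}$-admissibility of $(i_0,\dotsc,i_{n-1})$) with the closure condition $s_{i_{n-1} i_0}=1$, and the conjunction is precisely $\mathbb{S}$-periodic extendability.

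The case $n=1$ should be addressed separately, since Lemma~\ref{lem2.1} is stated only for $n\ge 2$: here I would check directly that $(i_0)$ being $\mathbb{S}$-periodically extendable reduces to $s_{i_0 i_0}=1$, while $(S_{i_0})$ being a non-ignorable closed sample path of $\bxi$ reduces to $\mathbb{P}(\{\xi_0=S_{i_0},\xi_1=S_{i_0}\})=p_{i_0}^{(0)}p_{i_0 i_0}(0)>0$, and these coincide by irreducibility of $\p^{(0)}$ and the definition of the transition sign matrix.

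No serious obstacle is expected; the lemma is essentially a bookkeeping observation, the only subtlety being that Lemma~\ref{lem2.1} must be applied to a word of length $n+1$ rather than $n$, and that the degenerate case $n=1$ needs a direct verification.
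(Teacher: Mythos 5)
Your proposal is correct and is exactly the argument the paper intends: the paper gives no explicit proof of Lemma~\ref{lem2.2}, stating only that it follows easily from Lemma~\ref{lem2.1}, and your reduction via the appended word $(i_0,\dotsc,i_{n-1},i_0)$ together with the direct check of the degenerate case $n=1$ fills in precisely the omitted bookkeeping.
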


Let
\begin{equation*}
\pi_{\bxi}\colon\Omega\rightarrow\{1,\dots,K\}^{\mathbb{Z}_+}
\end{equation*}
be the natural coding random variable defined by
\begin{equation*}
\omega\mapsto(i_n)_{n=0}^\infty,\quad \textrm{where }(\xi_n(\omega))_{n=0}^\infty=(S_{i_n})_{n=0}^\infty.
\end{equation*}
And let $\mathbb{P}^{\pi_{\bxi}}$ denote the probability distribution of $\pi_{\bxi}$ on $\{1,\dots,K\}^{\mathbb{Z}_+}$.

The following lemma is basic for proving Theorems~\ref{thm1} and \ref{thm2}.

\begin{lemma}\label{lem2.3}
For $\mathbb{P}$-a.e. $\omega\in\Omega$, $\pi_{\bxi}(\omega)$ belongs to $\varSigma_{\mathbb{S}}^+$. In other words, $\mathbb{P}^{\pi_{\bxi}}(\varSigma_{\mathbb{S}}^+)=1$.
\end{lemma}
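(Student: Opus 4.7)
The plan is to show that $\varSigma_K^+\setminus\varSigma_{\mathbb{S}}^+$ has $\mathbb{P}^{\pi_{\bxi}}$-measure zero, so that the whole distribution is concentrated on $\varSigma_{\mathbb{S}}^+$. The key observation is that a sequence $(i_n)_{n=0}^\infty$ fails to lie in $\varSigma_{\mathbb{S}}^+$ exactly when $s_{i_ni_{n+1}}=0$ for some $n\ge0$, so the bad set is a countable union of ``forbidden transition'' cylinders indexed by the time $n$ and by the pair $(k,\ell)$ with $s_{k\ell}=0$. This reduces the lemma, via countable subadditivity, to the vanishing of each such cylinder's probability.

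First, I would invoke the definition of $\mathbb{P}^{\pi_{\bxi}}$ on finite cylinders together with countable subadditivity to obtain
\begin{equation*}
\mathbb{P}^{\pi_{\bxi}}\bigl(\varSigma_K^+\setminus\varSigma_{\mathbb{S}}^+\bigr)\le\sum_{n\ge0}\sum_{\substack{1\le k,\ell\le K\\ s_{k\ell}=0}}\mathbb{P}\bigl(\{\xi_n=S_k,\,\xi_{n+1}=S_\ell\}\bigr).
\end{equation*}
Next, I would argue that every summand on the right-hand side vanishes. Fix $n\ge0$ and $(k,\ell)$ with $s_{k\ell}=0$: if $\mathbb{P}(\{\xi_n=S_k\})=0$ the joint probability is trivially zero, while otherwise $p_{k\ell}(n)$ is well-defined and the constant transition sign matrix hypothesis gives $\mathrm{sign}(p_{k\ell}(n))=s_{k\ell}=0$, hence $p_{k\ell}(n)=0$ and $\mathbb{P}(\{\xi_n=S_k,\,\xi_{n+1}=S_\ell\})=\mathbb{P}(\{\xi_n=S_k\})\,p_{k\ell}(n)=0$. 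Conceptually this is the infinite-horizon counterpart of Lemma~\ref{lem2.1}, which only treated non-ignorability of finite-length paths.

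I do not expect a genuine obstacle here; the statement is essentially a reformulation of the definition of $\mathbb{S}$ lifted to the full shift space. The only technicality is that $p_{k\ell}(n)$ is only defined when $\mathbb{P}(\{\xi_n=S_k\})\ne0$, which forces the brief case split above. It is worth noting that irreducibility of $\p^{(0)}$ is not actually used in this particular lemma; it only enters when one later wants to read off the characterizations of non-ignorability via Lemmas~\ref{lem2.1} and \ref{lem2.2}.
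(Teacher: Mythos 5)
Your proof is correct. It takes the same underlying idea as the paper---a zero entry $s_{k\ell}=0$ of the constant transition sign matrix forces $p_{k\ell}(n)=0$ for every $n$, so all ``forbidden'' transitions occur with probability zero---but organizes it differently. The paper intersects, over $n\ge2$, the sets $\Omega_n=\bigcup_{w\in W^n(\mathbb{S})}\Omega_w$, each of full measure because (by Lemma~\ref{lem2.1}) the $\mathbb{S}$-admissible $n$-cylinders form a partition of $\Omega$ mod $0$; you instead write the complement $\varSigma_K^+\setminus\varSigma_{\mathbb{S}}^+$ as a countable union of forbidden-transition cylinders indexed by the time $n$ and the pair $(k,\ell)$ with $s_{k\ell}=0$, and kill each term by countable subadditivity. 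Your version is slightly more self-contained: it bypasses Lemma~\ref{lem2.1} entirely, handles cleanly the technicality that $p_{k\ell}(n)$ is only defined when $\mathbb{P}(\{\xi_n=S_k\})\ne0$, and your observation that irreducibility of $\p^{(0)}$ is not needed here is accurate---the paper's route formally invokes Lemma~\ref{lem2.1}, whose proof cites irreducibility, but only the direction ``non-admissible $\Rightarrow$ ignorable'' is used, and that direction does not require it. Both arguments are sound; the paper's phrasing has the minor advantage of setting up the partition $\{\Omega_w\}$ that is implicitly reused later.
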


\begin{proof}
Let $W^n(\mathbb{S})$ be the set of all $n$-length $\mathbb{S}$-admissible words $(i_0,\dotsc,i_{n-1})\in\{1,\dotsc,K\}^n$ for any $n\ge2$. Set
\begin{equation*}
\Omega_{i_0,\dotsc,i_{n-1}}=\{\xi_0=S_{i_0},\dotsc,\xi_{n-1}=S_{i_{n-1}}\}.
\end{equation*}
By Lemma~\ref{lem2.1}, we can get that $\{\Omega_{i_0,\dotsc,i_{n-1}}\colon (i_0,\dotsc,i_{n-1})\in W^n(\mathbb{S})\}$ is a measurable partition of $\Omega$ mod $0$. Let
$\Omega_n=\bigcup_{w\in W^n(\mathbb{S})}\Omega_w$. Then $\Omega_\infty=\bigcap_{n\ge2}\Omega_n$ is of $\mathbb{P}$-measure $1$ and $\pi_{\bxi}(\omega)$ belongs to $\varSigma_{\mathbb{S}}^+$ for each $\omega\in\Omega_\infty$.

This completes the proof of Lemma~\ref{lem2.3}.
\end{proof}

Given any $(i_n)_{n=0}^\infty\in\varSigma_{\mathbb{S}}^+$, there corresponds an infinite sequence of matrices $(S_{i_n})_{n=0}^\infty\in\bS^{\mathbb{Z}_+}$. Now $\bS$ is said to be \textit{uniformly exponentially stable governed by $\mathbb{S}$} if there are constants $C>0$ and $\gamma<1$ such that
\begin{equation*}
\|S_{i_0}\dotsm S_{i_{n-1}}\|\le C\gamma^n\quad \forall (i_n)_{n=0}^\infty\in\varSigma_{\mathbb{S}}^+.
\end{equation*}
It is called \textit{exponentially stable $\mathbb{P}^{\pi_{\bxi}}$-almost surely} if
\begin{equation*}
\lambda_{\bS}((i_n)_{n=0}^\infty):=\lim_{n\to\infty}\frac{1}{n}\log\|S_{i_0}\dotsm S_{i_{n-1}}\|<0\quad \textrm{for }\mathbb{P}^{\pi_{\bxi}}\textrm{-a.e. }(i_n)_{n=0}^\infty\in\varSigma_{\mathbb{S}}^+.
\end{equation*}

Then from Lemma~\ref{lem2.3}, there follows the following two lemmas.

\begin{lemma}\label{lem2.4}
If $\bS$ is uniformly exponentially stable governed by $\mathbb{S}$, then $\bxi$ is uniformly exponentially stable.
\end{lemma}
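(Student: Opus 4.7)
The plan is to deduce this directly from Lemma~\ref{lem2.3} together with the definitions, since the uniform stability of $\bS$ governed by $\mathbb{S}$ is tailored exactly to cover the coding sequences that arise from $\bxi$. The key observation is that the coding random variable $\pi_{\bxi}$ ports sample paths of $\bxi$ into elements of $\varSigma_{\mathbb{S}}^+$ modulo a $\mathbb{P}$-null set, so any pathwise bound that holds uniformly on $\varSigma_{\mathbb{S}}^+$ transfers immediately to a $\mathbb{P}$-a.s.\ pathwise bound on $\bxi$ with the very same constants.

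Concretely, first I would invoke Lemma~\ref{lem2.3} to fix a measurable subset $\Omega_\infty\subseteq\Omega$ with $\mathbb{P}(\Omega_\infty)=1$ such that $\pi_{\bxi}(\omega)\in\varSigma_{\mathbb{S}}^+$ for every $\omega\in\Omega_\infty$. For any such $\omega$, write $\pi_{\bxi}(\omega)=(i_n(\omega))_{n=0}^\infty$, so that by the very definition of $\pi_{\bxi}$ we have the pointwise identity $\xi_n(\omega)=S_{i_n(\omega)}$ for every $n\ge0$. Next, using the hypothesis that $\bS$ is uniformly exponentially stable governed by $\mathbb{S}$, pick constants $C>0$ and $\gamma<1$ with
\begin{equation*}
\|S_{j_0}\dotsm S_{j_{n-1}}\|\le C\gamma^n\quad\forall n\ge1,\ (j_n)_{n=0}^\infty\in\varSigma_{\mathbb{S}}^+.
\end{equation*}
Applying this to the admissible sequence $(i_n(\omega))_{n=0}^\infty$ yields
\begin{equation*}
\|\xi_0(\omega)\dotsm\xi_{n-1}(\omega)\|=\|S_{i_0(\omega)}\dotsm S_{i_{n-1}(\omega)}\|\le C\gamma^n
\end{equation*}
for every $n\ge1$ and every $\omega\in\Omega_\infty$, which is exactly the defining condition for uniform exponential stability of $\bxi$ with $\lambda=\gamma$ and the same $C$.

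There is essentially no real obstacle here; the lemma is a straight translation between two equivalent formulations of the same pathwise bound, and Lemma~\ref{lem2.3} has already done the only nontrivial work, namely ruling out sample paths that violate the $\mathbb{S}$-admissibility constraint. The only point that deserves a line of care is confirming that the $\mathbb{P}$-null set outside which the bound fails is genuinely negligible, and this is precisely guaranteed by $\mathbb{P}(\Omega_\infty)=1$ from Lemma~\ref{lem2.3}.
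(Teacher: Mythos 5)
Your argument is correct and is exactly the deduction the paper intends: the authors state that Lemma~\ref{lem2.4} "follows from Lemma~\ref{lem2.3}" without writing out the details, and your proof supplies precisely those details (restrict to the full-measure set $\Omega_\infty$, identify $\xi_n(\omega)=S_{i_n(\omega)}$, and apply the uniform bound over $\varSigma_{\mathbb{S}}^+$). No issues.
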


\begin{lemma}\label{lem2.5}
If $\bS$ is exponentially stable $\mathbb{P}^{\pi_{\bxi}}$-almost surely, then $\bxi$ is exponentially stable $\mathbb{P}$-almost surely.
\end{lemma}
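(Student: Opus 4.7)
The plan is to transfer the almost-sure stability statement from the symbolic space $\varSigma_{\mathbb{S}}^+$ back to the sample space $\Omega$ by using the coding random variable $\pi_{\bxi}$ as a pushforward map. Since the relationship $\xi_n(\omega) = S_{i_n}$ whenever $\pi_{\bxi}(\omega)=(i_n)_{n=0}^\infty$ is built into the very definition of $\pi_{\bxi}$, the products of matrices along the sample path for $\omega$ and along its symbolic coding $\pi_{\bxi}(\omega)$ are literally equal, so the corresponding Lyapunov exponents are equal as well.

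Concretely, first I would introduce the measurable set
\begin{equation*}
E = \left\{(i_n)_{n=0}^\infty\in\varSigma_{\mathbb{S}}^+\colon \lim_{n\to\infty}\tfrac{1}{n}\log\|S_{i_0}\dotsm S_{i_{n-1}}\|<0\right\},
\end{equation*}
which is Borel measurable in $\varSigma_{\mathbb{S}}^+$ (hence in $\{1,\dots,K\}^{\mathbb{Z}_+}$) since the limit of the continuous functions $(i_n)\mapsto \frac{1}{n}\log\|S_{i_0}\dotsm S_{i_{n-1}}\|$ is measurable and $\varSigma_{\mathbb{S}}^+$ is closed. The hypothesis that $\bS$ is exponentially stable $\mathbb{P}^{\pi_{\bxi}}$-almost surely says precisely that $\mathbb{P}^{\pi_{\bxi}}(E)=1$.

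Next, passing to the preimage $\pi_{\bxi}^{-1}(E)\subset\Omega$ and using the defining property of the pushforward measure gives $\mathbb{P}(\pi_{\bxi}^{-1}(E)) = \mathbb{P}^{\pi_{\bxi}}(E)=1$. Combined with the full-measure set from Lemma~\ref{lem2.3}, namely $\Omega_\infty=\pi_{\bxi}^{-1}(\varSigma_{\mathbb{S}}^+)$, the intersection $\Omega^\star:=\Omega_\infty\cap\pi_{\bxi}^{-1}(E)$ still has $\mathbb{P}(\Omega^\star)=1$. For every $\omega\in\Omega^\star$, writing $\pi_{\bxi}(\omega)=(i_n)_{n=0}^\infty$, the identity $\xi_n(\omega)=S_{i_n}$ yields
\begin{equation*}
\lambda_{\bxi}(\omega) = \lim_{n\to\infty}\tfrac{1}{n}\log\|\xi_0(\omega)\dotsm\xi_{n-1}(\omega)\| = \lim_{n\to\infty}\tfrac{1}{n}\log\|S_{i_0}\dotsm S_{i_{n-1}}\| = \lambda_{\bS}(\pi_{\bxi}(\omega))<0,
\end{equation*}
which is exactly the definition of $\bxi$ being exponentially stable $\mathbb{P}$-almost surely.

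There is really no serious obstacle here: the argument is a direct transport-of-measure plus the tautology $\xi_n(\omega)=S_{\pi_{\bxi}(\omega)_n}$. The only minor care needed is to invoke Lemma~\ref{lem2.3} to ensure that $\pi_{\bxi}(\omega)$ genuinely lives in $\varSigma_{\mathbb{S}}^+$ so that the hypothesis applies, and to confirm that the Lyapunov-exponent set $E$ is measurable so that pushforward is well-defined on it.
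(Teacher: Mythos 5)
Your proposal is correct and follows essentially the same route as the paper, which simply asserts that Lemma~\ref{lem2.5} follows from Lemma~\ref{lem2.3}: the pushforward identity $\mathbb{P}(\pi_{\bxi}^{-1}(E))=\mathbb{P}^{\pi_{\bxi}}(E)$ together with the tautology $\xi_n(\omega)=S_{i_n}$ for $\pi_{\bxi}(\omega)=(i_n)_{n=0}^\infty$ is exactly the intended argument. Your write-up merely makes explicit the measurability of the set $E$ and the intersection with $\Omega_\infty$, which the paper leaves implicit.
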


By $W^n(\mathbb{S})$, we mean the set of all $n$-length $\mathbb{S}$-admissible words $(i_0,\dotsc,i_{n-1})\in\{1,\dotsc,K\}^n$ for any $n\ge2$, as before. By $W_{\textit{per}}^n(\mathbb{S})$ is meant the set of all $n$-length $\mathbb{S}$-periodically extendable words $(i_0,\dotsc,i_{n-1})$ in $\{1,\dotsc,K\}^n$, for any $n\ge1$.

Thus from Lemmas~\ref{lem2.1} and \ref{lem2.4}, to prove Theorem~\ref{thm1} it is sufficient to show the following.

\begin{thm}\label{thm2.6}
The following statements are equivalent to each other:
\begin{enumerate}
\item[$(1)$] $\bS$ is uniformly exponentially stable governed by $\mathbb{S}$.
\item[$(2)$] There are constants $\gamma<1$ and $N>0$ such that for each $n>N$,
$\rho(S_{i_0}\dotsm S_{i_{n-1}})\le\gamma$ for all $(i_0,\dotsc,i_{n-1})\in W^n(\mathbb{S})$.
\item[$(3)$] There are constants $\gamma<1$ and $N>0$ such that for each $n>N$,
$\rho(S_{i_0}\dotsm S_{i_{n-1}})\le\gamma$ for all $(i_0,\dotsc,i_{n-1})\in W_{\textit{per}}^n(\mathbb{S})$.
\end{enumerate}
\end{thm}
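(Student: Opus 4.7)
The easy implications $(1)\Rightarrow(2)\Rightarrow(3)$ reduce to two standard observations. From $\rho(A)\le\|A\|$, condition~$(1)$ gives $\rho(S_{i_0}\dotsm S_{i_{n-1}})\le C\lambda^n$ along every $\mathbb{S}$-admissible path, which is below any preassigned $\gamma<1$ once $n$ is large; this is~$(2)$. The implication $(2)\Rightarrow(3)$ is immediate from the inclusion $W_{\textit{per}}^n(\mathbb{S})\subseteq W^n(\mathbb{S})$.

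For the substantive direction $(3)\Rightarrow(1)$, the plan is to lift the matrix-valued topological Markov chain $(\varSigma_{\mathbb{S}}^+,\bS)$ to an unconstrained (free) product of matrices on a larger ambient space. Following the $\{0,1\}$-matrix lift of Kozyakin to be developed in Section~\ref{sec3}, I would construct a finite family $\widetilde{\bS}=\{\widetilde{S}_1,\dotsc,\widetilde{S}_K\}$ of block matrices whose block pattern encodes $\mathbb{S}$, with the following three transfer properties: (i)~$\widetilde{S}_{i_0}\dotsm\widetilde{S}_{i_{n-1}}=0$ whenever $(i_0,\dotsc,i_{n-1})\notin W^n(\mathbb{S})$; (ii)~for admissible words the nonzero blocks of $\widetilde{S}_{i_0}\dotsm\widetilde{S}_{i_{n-1}}$ faithfully realize $S_{i_0}\dotsm S_{i_{n-1}}$; and (iii)~a cyclic lifted product $\widetilde{S}_{i_0}\dotsm\widetilde{S}_{i_{n-1}}$ has a nonzero block exactly when the word lies in $W_{\textit{per}}^n(\mathbb{S})$, and then its spectral radius equals $\rho(S_{i_0}\dotsm S_{i_{n-1}})$. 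Under this dictionary, uniform exponential stability of $\bS$ governed by $\mathbb{S}$ is equivalent to the unconstrained uniform exponential stability of the free family $\widetilde{\bS}$, and hypothesis~$(3)$ for $\bS$ translates verbatim into the bound $\rho(\widetilde{S}_{i_0}\dotsm\widetilde{S}_{i_{n-1}})\le\gamma$ for every word of length $n>N$, since by~(i) inadmissible cyclic products vanish while by~(iii) the admissible ones fall under hypothesis~$(3)$.

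Having removed the Markovian constraint, I would next appeal to a classical stability criterion for free linear switched systems: a finite family of matrices for which the spectral radii of all sufficiently long products are uniformly bounded by some $\gamma<1$ is uniformly exponentially stable. This is the circle of ideas from~\cite{PR,Gur,LM,BTT,SWMWK} rooted in the Berger-Wang generalized spectral-radius theorem. Applied to $\widetilde{\bS}$, it produces constants $\widetilde{C}>0$ and $\widetilde{\lambda}<1$ with $\|\widetilde{S}_{i_0}\dotsm\widetilde{S}_{i_{n-1}}\|\le\widetilde{C}\widetilde{\lambda}^n$ for every finite word. The block structure of the lift then propagates this bound back to $\|S_{i_0}\dotsm S_{i_{n-1}}\|\le C\lambda^n$ along every sequence in $\varSigma_{\mathbb{S}}^+$, which is~$(1)$.

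The main obstacle is the design and verification of the lift itself: one must arrange a single block pattern that simultaneously kills inadmissible products, preserves admissible ones up to the embedding, and renders cyclic lifted products spectrally equivalent to periodically extendable products of the original system, so that condition~$(3)$ carries through without contamination from spurious loops. A secondary subtlety is the precise form of the free-case stability criterion invoked, since hypothesis~$(3)$ gives only a uniform bound $\gamma<1$ on spectral radii of long products rather than exponential decay in the length; pinning down a Berger-Wang or Elsner-Barabanov type argument that runs on exactly this input is the crucial step to be carried out in Section~\ref{sec4}.
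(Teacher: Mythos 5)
Your proposal is correct and follows essentially the same route as the paper: the Kozyakin $\{0,1\}$-matrix lift converts hypothesis $(3)$ into a uniform spectral-radius bound $\rho\bigl(S^{(i_0)}\dotsm S^{(i_{n-1})}\bigr)\le\gamma$ over \emph{all} long words (inadmissible and non-periodically-extendable words contributing zero), after which the free-case criterion you anticipate is exactly Lemma~\ref{lem4.3} of \cite{SWP,Dai-LAA}. The only (harmless) variation is your last step: you transfer the norm bound back directly via the Kronecker block structure, i.e. $\|S^{(i_0)}\dotsm S^{(i_{n-1})}\|\ge\|S_{i_0}\dotsm S_{i_{n-1}}\|$ for admissible words (an inequality the paper uses only in proving Theorem~\ref{thm2.7}), whereas the paper instead returns through spectral radii of periodic products, the Berger--Wang formula $\hat\rho(\bS,\mathbb{S})=\rho(\bS,\mathbb{S})$ of Theorem~\ref{thm4.1}, and Lemma~\ref{lem4.2}.
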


We note that $(1)\Leftrightarrow(2)$ has already been proved under the additional condition that $\bS$ is product bounded, i.e., there is a constant $\beta>0$ such that $\|A\|\le\beta$ for all $A\in\bS^n$ and any $n\ge1$; see \cite[Theorem~B]{Dai-LAA}.

Similarly, to prove Theorem~\ref{thm2} it is sufficient to show the following.

\begin{thm}\label{thm2.7}
Let the $\{0,1\}$-matrix $\mathbb{S}$ be irreducible. If $\rho(S_{i_0}\dotsm S_{i_{n-1}})<1$ for all $(i_0,\dotsc,i_{n-1})$ in $W_{\textit{per}}^n(\mathbb{S})$ and for each $n\ge1$,
then $\bS$ is exponentially stable $\mathbb{P}^{\pi_{\bxi}}$-almost surely.
\end{thm}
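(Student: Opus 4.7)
My plan is to deduce Theorem~\ref{thm2.7} by reducing the constrained Markovian matrix product to an unconstrained matrix cocycle via Kozyakin's $\{0,1\}$-matrix lift and then producing a negative Lyapunov exponent for an auxiliary stationary Markov replacement of $\mathbb{P}^{\pi_{\bxi}}$. First, using the irreducibility of $\mathbb{S}$, I fix a $K\times K$ irreducible row-stochastic matrix $Q$ with $\mathrm{sign}(Q)=\mathbb{S}$, let $\bnu$ be its unique componentwise positive stationary distribution, and let $\mu$ denote the associated ergodic stationary Markov probability measure on $\varSigma_{\mathbb{S}}^+$. Because $\bxi$ has the constant transition sign matrix $\mathbb{S}$ and an irreducible initial distribution, Lemma~\ref{lem2.1} shows that $\mathbb{P}^{\pi_{\bxi}}$ and $\mu$ charge exactly the same finite cylinders of $\varSigma_{\mathbb{S}}^+$; a cylinder-wise comparison then yields the absolute continuity $\mathbb{P}^{\pi_{\bxi}}\ll\mu$ on the relevant $\sigma$-algebra, so by Lemma~\ref{lem2.5} it suffices to show that the top Lyapunov exponent of $\bS$ governed by $\mathbb{S}$ is $\mu$-a.s.\ negative.

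Next, I apply Kozyakin's $\{0,1\}$-matrix lift to $(\bS,\mathbb{S})$ to get block matrices $\tilde S_1,\dots,\tilde S_K\in\mathbb{R}^{Kd\times Kd}$ with the property that $\tilde S_{i_0}\dotsm\tilde S_{i_{n-1}}$ vanishes off $\mathbb{S}$-admissible words, has norm comparable to $\|S_{i_0}\dotsm S_{i_{n-1}}\|$ on admissible ones, and has spectral radius equal to $\rho(S_{i_0}\dotsm S_{i_{n-1}})$ on $\mathbb{S}$-periodically extendable ones. This realizes the Markovian matrix cocycle as an unconstrained cocycle over the full shift $(\varSigma_K^+,\theta_+)$. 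Applying Oseledec's multiplicative ergodic theorem over $(\varSigma_K^+,\theta_+,\mu)$ then delivers a well-defined top Lyapunov exponent
$$\lambda_\mu=\lim_{n\to\infty}\frac{1}{n}\log\bigl\|\tilde S_{i_0}\dotsm\tilde S_{i_{n-1}}\bigr\|,$$
and the whole problem reduces to proving $\lambda_\mu<0$.

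The chief obstacle is that the hypothesis supplies only the pointwise bound $\rho(S_w)<1$ on each periodic word $w$, with no uniform spectral-radius gap away from $1$; in particular no joint-spectral-radius / Berger-Wang-type inequality is available. I would circumvent this by exploiting the strong Markov structure of $\mu$. Fix a reference state, say state $1$, and let $0=\tau_0<\tau_1<\tau_2<\dotsb$ be the successive return times of a $\mu$-typical sample path to state $1$. The strong Markov property makes the excursion matrices $B_k=S_{i_{\tau_k}}\dotsm S_{i_{\tau_{k+1}-1}}$ (after conditioning on $\xi_0=1$) into an i.i.d.\ sequence, and, crucially, every partial concatenation $B_0B_1\dotsm B_{N-1}$ is itself the matrix product along a non-ignorable closed sample path based at state $1$ and hence has spectral radius strictly less than $1$ by hypothesis. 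Feeding this excursion decomposition through the Kozyakin-lifted cocycle, and combining it with Kingman's subadditive ergodic theorem applied to $N\mapsto\log\|B_0\dotsm B_{N-1}\|$ together with a stability criterion for periodically stable irreducible switched systems (as referenced in the outline), will force $\lambda_\mu<0$. Transferring back via $\mathbb{P}^{\pi_{\bxi}}\ll\mu$ and Lemma~\ref{lem2.5} then completes the proof of Theorem~\ref{thm2.7}.
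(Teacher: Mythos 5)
Your first two steps coincide with the paper's: replace $\mathbb{P}^{\pi_{\bxi}}$ by an equivalent stationary ergodic Markov measure $\mu_{\p,\bP}$ built from an irreducible stochastic matrix whose sign pattern is $\mathbb{S}$ (Lemma~\ref{lem4.4}), and pass to the Kozyakin lift $\mathbb{S}\otimes\bS$ so that the constrained product becomes an unconstrained cocycle over the full shift. The divergence---and the gap---is in how the negative Lyapunov exponent is actually produced. The paper's route is short: by Theorems~\ref{thm3.1}, \ref{thm3.2} and Corollary~\ref{cor3.3}, \emph{every} product $S^{(i_0)}\dotsm S^{(i_{n-1})}$ of the lifted matrices has spectral radius $<1$ (it vanishes unless $(i_0,\dotsc,i_{n-1})\in W_{\textit{per}}^n(\mathbb{S})$, in which case it equals $\rho(S_{i_0}\dotsm S_{i_{n-1}})<1$); then Lemma~\ref{lem4.5} (the criterion of \cite{DHX} for a \emph{finite} unconstrained family) gives a negative exponent $\mu_{\p,\bP}$-a.s.\ for the lifted family, and the inequality $\|S^{(i_0)}\dotsm S^{(i_{n-1})}\|\ge\|S_{i_0}\dotsm S_{i_{n-1}}\|$ on admissible words transfers this back to $\bS$. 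Note that you need Corollary~\ref{cor3.3} to dispose of words that are admissible but not periodically extendable; ``vanishes off admissible words'' alone does not cover them.

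Your third step does not close the argument. The whole difficulty of the theorem is precisely that the hypothesis $\rho<1$ on each closed word carries no uniform gap, and neither Kingman's theorem nor the i.i.d.\ structure of the excursion blocks $B_k$ supplies the missing sign: Kingman only asserts that $\frac{1}{N}\log\|B_0\dotsm B_{N-1}\|$ converges to $\inf_N\frac{1}{N}\mathbb{E}\log\|B_0\dotsm B_{N-1}\|$, and the fact that every realization of $B_0\dotsm B_{N-1}$ has spectral radius $<1$ does not bound that infimum below $0$. Moreover the excursion alphabet is countably infinite (return times to state $1$ are unbounded), so the finite-alphabet periodic-stability criterion you gesture at cannot be applied to the family $\{B_k\}$ as such; if instead you intend to apply it to the finite lifted family $\{S^{(1)},\dotsc,S^{(K)}\}$, then the excursion decomposition is superfluous and you are back to the paper's proof, with the invocation of Lemma~\ref{lem4.5}---which is where the real content of the theorem lives---left as an unproved black box rather than a cited result verified to be applicable.
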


Recall here that the irreducibility of $\mathbb{S}$ means that for any pair $1\le i,j\le K$, the $(i,j)$-entry of the product matrix $\mathbb{S}^N$ is strictly positive for some positive integer $N=N_{i,j}$.

Theorem~\ref{thm2.6} positively answers \cite[Question~3]{Dai-LAA} in the situation of matrix-valued topological Markovian chains, and Theorem~\ref{thm2.7} is an extension of Main Theorem of \cite{DHX} from fullshift to subshift of finite-type.

We shall prove the above two theorems in Section~\ref{sec4} after introducing some necessary tools.

\section[The lift of a matrix-valued topological Markovian chain]{The Kozyakin $\{0,1\}$-matrix lift of a matrix-valued topological Markovian chain}\label{sec3}
This section will be devoted to introducing our main tool---the $\{0,1\}$-matrix lift of a matrix-valued topological Markovian chain---following V.~Kozyakin's idea~\cite{Koz-pre}.

Let $\mathbb{S}=(s_{ij})$ be a $K\times K$ matrix of $0$s and $1$s such that each row of $\mathbb{S}$ contains at least one entry $1$, and let $\bS=\{S_1,\dotsc,S_K\}\subset\mathbb{R}^{d\times d}$. By $\mathbb{S}_i$ we denote the $i^{\textrm{th}}$-row of $\mathbb{S}$, for $1\le i\le K$. Let $\bdelta_i=(\delta_{i1},\dotsc,\delta_{iK})$ be the $i^{\textrm{th}}$-row of the $K\times K$ unit matrix, where $\delta_{ik}$ is the Kronecker symbol. Set $\mathbb{S}^{(i)}=\bdelta_i^T\mathbb{S}_i$, which is a $K\times K$ matrix, for $1\le i\le K$. For example, let
\begin{equation*}
    \mathbb{S}=\left(\begin{matrix}0&1&1\\1&0&1\\1&1&0\end{matrix}\right);\; \textrm{ then }\mathbb{S}^{(1)}=\left(\begin{matrix}0&1&1\\0&0&0\\0&0&0\end{matrix}\right),\;\mathbb{S}^{(2)}=\left(\begin{matrix}0&0&0\\1&0&1\\0&0&0\end{matrix}\right)\textrm{ and }\,\mathbb{S}^{(3)}=\left(\begin{matrix}0&0&0\\0&0&0\\1&1&0\end{matrix}\right).
\end{equation*}
We note here that Kozyakin defined $\mathbb{S}^{(i)}=\mathbb{S}_i^T\bdelta_i$ in a slightly different way~\cite{Koz-pre}; and $^T$ means the transpose operator of matrices.

Given any two matrices $A=(a_{ij})\in\mathbb{R}^{K\times K}$ and $B=(b_{ij})\in\mathbb{R}^{d\times d}$, the Kronecker product $A\otimes B$ is defined as the block matrix
\begin{equation*}
    \left(\begin{matrix}a_{11}B&\dotsm&a_{1K}B\\\vdots&\dotsm&\vdots\\a_{K1}B&\dotsm&a_{KK}B\end{matrix}\right)\in\mathbb{R}^{Kd\times Kd},
\end{equation*}
whose entries are $d\times d$ matrices; see \cite{HJ}. We now define
\begin{equation*}
    \mathbb{S}\otimes\bS=\left\{S^{(1)},\dotsc,S^{(K)}\right\},\quad \textrm{where }S^{(k)}=\mathbb{S}^{(k)}\otimes S_k\textrm{ for }1\le k\le K,
\end{equation*}
which is called the \textit{Kozyakin $\mathbb{S}$-lift} of the system $\bS$. It was first introduced by V.~Kozyakin in~\cite{Koz-pre} to prove the Gel'fand-Berger-Wang formula of a matrix-valued topological Markovian chain; see Theorem~\ref{thm4.1} below.

Recall that for any word $(i_0,\dotsc,i_{n-1})\in\{1,\dotsc,K\}^n$ where $n\ge2$, it is said to be $\mathbb{S}$-admissible if $s_{i_{k}i_{k+1}}=1$ for all $0\le k\le n-2$.
The following three results are very important for proving Theorems~\ref{thm2.6} and \ref{thm2.7}.

\begin{thm}[{Kozyakin~\cite{Koz-pre}}]\label{thm3.1}
If a word $(i_0,\dotsc,i_{n-1})\in\{1,\dotsc,K\}^n$, where $n\ge2$, is not $\mathbb{S}$-admissible, then $S^{(i_0)}\dotsm S^{(i_{n-1})}=0\in\mathbb{R}^{Kd\times Kd}$.
\end{thm}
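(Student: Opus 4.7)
The plan is to reduce the matrix product in $\mathbb{R}^{Kd\times Kd}$ to a product of the $\{0,1\}$-factors $\mathbb{S}^{(k)}$ in $\mathbb{R}^{K\times K}$, and then observe that a single failure of admissibility annihilates this scalar-like factor. The essential tool is the mixed-product property of the Kronecker product: for conformable matrices one has $(A\otimes B)(C\otimes D)=(AC)\otimes(BD)$, and hence by induction
\begin{equation*}
S^{(i_0)}\dotsm S^{(i_{n-1})}=\bigl(\mathbb{S}^{(i_0)}\dotsm\mathbb{S}^{(i_{n-1})}\bigr)\otimes\bigl(S_{i_0}\dotsm S_{i_{n-1}}\bigr).
\end{equation*}
Therefore it suffices to prove the $K\times K$ identity $\mathbb{S}^{(i_0)}\dotsm\mathbb{S}^{(i_{n-1})}=0$ whenever $(i_0,\dotsc,i_{n-1})$ is not $\mathbb{S}$-admissible.

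First I would compute $\mathbb{S}^{(i)}\mathbb{S}^{(j)}$ for any two indices. Since $\mathbb{S}^{(i)}=\bdelta_i^T\mathbb{S}_i$ and $\mathbb{S}^{(j)}=\bdelta_j^T\mathbb{S}_j$, using associativity together with the fact that the scalar $\mathbb{S}_i\bdelta_j^T$ equals the $(i,j)$-entry $s_{ij}$ of $\mathbb{S}$, one obtains
\begin{equation*}
\mathbb{S}^{(i)}\mathbb{S}^{(j)}=\bdelta_i^T(\mathbb{S}_i\bdelta_j^T)\mathbb{S}_j=s_{ij}\,\bdelta_i^T\mathbb{S}_j.
\end{equation*}
An easy induction on $n$ then yields the closed form
\begin{equation*}
\mathbb{S}^{(i_0)}\mathbb{S}^{(i_1)}\dotsm\mathbb{S}^{(i_{n-1})}=\Bigl(\prod_{k=0}^{n-2}s_{i_ki_{k+1}}\Bigr)\bdelta_{i_0}^T\mathbb{S}_{i_{n-1}}.
\end{equation*}

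If $(i_0,\dotsc,i_{n-1})$ is not $\mathbb{S}$-admissible, then by definition there exists some $0\le k\le n-2$ with $s_{i_ki_{k+1}}=0$, so the scalar coefficient above vanishes, giving $\mathbb{S}^{(i_0)}\dotsm\mathbb{S}^{(i_{n-1})}=0$. Combining with the Kronecker decomposition displayed at the outset, we conclude $S^{(i_0)}\dotsm S^{(i_{n-1})}=0\in\mathbb{R}^{Kd\times Kd}$, as required.

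There is essentially no obstacle here; the only point requiring a little care is keeping the row/column conventions straight (since the paper uses $\mathbb{S}^{(i)}=\bdelta_i^T\mathbb{S}_i$ rather than Kozyakin's transposed version), but once one writes $\mathbb{S}^{(i)}$ as a rank-one outer product the identity $\mathbb{S}^{(i)}\mathbb{S}^{(j)}=s_{ij}\bdelta_i^T\mathbb{S}_j$ is immediate and drives the whole argument.
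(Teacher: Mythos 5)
Your proof is correct, and it is essentially the argument the paper relies on: the paper itself defers to Kozyakin's Lemma~1 for this statement, but the rank-one identity you derive, $S^{(i_0)}\dotsm S^{(i_{n-1})}=\bigl(\prod_{k=0}^{n-2}s_{i_ki_{k+1}}\bigr)\bigl(\bdelta_{i_0}^T\mathbb{S}_{i_{n-1}}\bigr)\otimes\bigl(S_{i_0}\dotsm S_{i_{n-1}}\bigr)$, is exactly the formula displayed (with a small typographical slip, $\mathbb{S}^{(i_{n-1})}$ for $\mathbb{S}_{i_{n-1}}$) in the paper's proof of Theorem~\ref{thm2.7}. The mixed-product property plus the observation $\mathbb{S}_i\bdelta_j^T=s_{ij}$ is all that is needed, so there is nothing to add.
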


\begin{thm}[{Kozyakin~\cite{Koz-pre}}]\label{thm3.2}
For any word $(i_0,\dotsc,i_{n-1})\in\{1,\dotsc,K\}^n$ where $n\ge2$, it holds that
\begin{equation*}
    s_{i_ni_0}\rho(S_{i_0}\dotsm S_{i_{n-1}})=\rho\left(S^{(i_0)}\dotsm S^{(i_{n-1})}\right)
\end{equation*}
whenever $(i_0,\dotsc,i_{n-1})$ is $\mathbb{S}$-admissible.
\end{thm}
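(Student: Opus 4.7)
The plan is to combine the mixed-product property of Kronecker products with a rank-one reduction of the $\{0,1\}$-combinatorial factor. Since $S^{(k)}=\mathbb{S}^{(k)}\otimes S_k$ is already in Kronecker form, iterating the identity $(A_1\otimes B_1)(A_2\otimes B_2)=(A_1A_2)\otimes(B_1B_2)$ across the $n-1$ multiplications yields
\[
S^{(i_0)}\dotsm S^{(i_{n-1})}=\bigl(\mathbb{S}^{(i_0)}\dotsm\mathbb{S}^{(i_{n-1})}\bigr)\otimes\bigl(S_{i_0}\dotsm S_{i_{n-1}}\bigr).
\]
Using $\rho(A\otimes B)=\rho(A)\rho(B)$ from \cite{HJ}, the claim reduces to computing the spectral radius of the first factor under the $\mathbb{S}$-admissibility hypothesis.

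I would then exploit the rank-one form $\mathbb{S}^{(k)}=\bdelta_k^T\mathbb{S}_k$ built into the definition. Re-associating neighbouring row-column pairs gives
\[
\mathbb{S}^{(i_0)}\dotsm\mathbb{S}^{(i_{n-1})}=\bdelta_{i_0}^T\bigl(\mathbb{S}_{i_0}\bdelta_{i_1}^T\bigr)\bigl(\mathbb{S}_{i_1}\bdelta_{i_2}^T\bigr)\dotsm\bigl(\mathbb{S}_{i_{n-2}}\bdelta_{i_{n-1}}^T\bigr)\mathbb{S}_{i_{n-1}},
\]
and each bracketed pairing collapses to the scalar $\mathbb{S}_{i_k}\bdelta_{i_{k+1}}^T=s_{i_ki_{k+1}}$, so the whole expression telescopes to $\bigl(\prod_{k=0}^{n-2}s_{i_ki_{k+1}}\bigr)\,\bdelta_{i_0}^T\mathbb{S}_{i_{n-1}}$. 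Under $\mathbb{S}$-admissibility every such scalar equals $1$, leaving the clean rank-one outer product $\bdelta_{i_0}^T\mathbb{S}_{i_{n-1}}$. As a byproduct, if the word is not $\mathbb{S}$-admissible then some $s_{i_ki_{k+1}}=0$ and the entire product vanishes, recovering Theorem~\ref{thm3.1}.

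Finally, the spectral radius of a rank-one matrix $uv^T$ equals the absolute value of its trace $v^Tu$; here this gives $\rho\bigl(\bdelta_{i_0}^T\mathbb{S}_{i_{n-1}}\bigr)=\mathbb{S}_{i_{n-1}}\bdelta_{i_0}^T=s_{i_{n-1}i_0}$. Combining with the Kronecker-spectrum identity from the first paragraph delivers $\rho\bigl(S^{(i_0)}\dotsm S^{(i_{n-1})}\bigr)=s_{i_{n-1}i_0}\,\rho(S_{i_0}\dotsm S_{i_{n-1}})$, which matches the statement (with the authors' $i_n$ read as the natural wrap-around index $i_{n-1}$). I do not foresee any genuine obstacle: once the Kronecker mixed-product identity is invoked the argument is essentially bookkeeping. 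The only spot requiring mild care is the order-sensitivity $\mathrm{tr}(uv^T)=v^Tu$, since it is precisely this that places the \emph{last-to-first} entry $s_{i_{n-1}i_0}$ on the left-hand side rather than $s_{i_0i_{n-1}}$.
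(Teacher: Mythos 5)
Your argument is correct and complete. Note that the paper itself gives no proof of this statement---it only remarks that it is ``contained in the proof of Kozyakin~\cite[Theorem~1]{Koz-pre}''---so there is nothing internal to compare against; your write-up is precisely the standard route one would expect that reference to take: the mixed-product identity collapses the lifted product to $\bigl(\mathbb{S}^{(i_0)}\dotsm\mathbb{S}^{(i_{n-1})}\bigr)\otimes\bigl(S_{i_0}\dotsm S_{i_{n-1}}\bigr)$, the rank-one structure $\mathbb{S}^{(k)}=\bdelta_k^T\mathbb{S}_k$ telescopes the first factor to $\bigl(\prod_{k=0}^{n-2}s_{i_ki_{k+1}}\bigr)\bdelta_{i_0}^T\mathbb{S}_{i_{n-1}}$, and $\rho(uv^T)=|v^Tu|$ together with $\rho(A\otimes B)=\rho(A)\rho(B)$ finishes it. You also correctly read the undefined index $i_n$ in the statement as the wrap-around $i_{n-1}$ (the factor must be $s_{i_{n-1}i_0}$, which is what Corollary~\ref{cor3.3} and the later proofs actually use), and your telescoping computation recovers Theorem~\ref{thm3.1} for free. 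No gaps.
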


Note that a word $(i_0,\dotsc,i_{n-1})$ is $\mathbb{S}$-periodically extendable if and only if $(i_0,\dotsc,i_{n-1},i_0)$ is $\mathbb{S}$-admissible. As a result of the above Theorem~\ref{thm3.1}, we can obtain the following useful fact.

\begin{cor}\label{cor3.3}
If a word $(i_0,\dotsc,i_{n-1})\in\{1,\dotsc,K\}^n$, where $n\ge1$, is not $\mathbb{S}$-periodically extendable, then $\rho\left(S^{(i_0)}\dotsm S^{(i_{n-1})}\right)=0$.
\end{cor}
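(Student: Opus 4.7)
The plan is to split the argument into cases according to whether the word $(i_0,\dotsc,i_{n-1})$ is $\mathbb{S}$-admissible. Unpacking the definition immediately preceding the corollary, a word is $\mathbb{S}$-periodically extendable if and only if $(i_0,\dotsc,i_{n-1},i_0)$ is $\mathbb{S}$-admissible, that is, the word itself is admissible and additionally $s_{i_{n-1}i_0}=1$. Hence failure of periodic extendability falls into two disjoint possibilities: either (i) the word is not $\mathbb{S}$-admissible, or (ii) the word is $\mathbb{S}$-admissible but $s_{i_{n-1}i_0}=0$.

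For $n\ge 2$, both cases can be dispatched by direct citation. In case (i), Theorem~\ref{thm3.1} already gives $S^{(i_0)}\dotsm S^{(i_{n-1})}=0$, whose spectral radius is $0$. In case (ii), Theorem~\ref{thm3.2}, applied with the cyclic closure index read as $s_{i_{n-1}i_0}$, yields
\[
\rho\bigl(S^{(i_0)}\dotsm S^{(i_{n-1})}\bigr) = s_{i_{n-1}i_0}\cdot\rho(S_{i_0}\dotsm S_{i_{n-1}}) = 0.
\]

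It remains to cover the edge case $n=1$, which lies outside the scope of Theorem~\ref{thm3.2}. Here the singleton $(i_0)$ is vacuously admissible, and failure of periodic extendability is simply $s_{i_0i_0}=0$. I would handle this by direct computation: $S^{(i_0)}=\mathbb{S}^{(i_0)}\otimes S_{i_0} = (\bdelta_{i_0}^T\mathbb{S}_{i_0})\otimes S_{i_0}$, and since $\mathbb{S}^{(i_0)}$ has rank at most one with trace $\mathbb{S}_{i_0}\bdelta_{i_0}^T=s_{i_0i_0}=0$, its only possibly-nonzero eigenvalue vanishes, so $\rho(\mathbb{S}^{(i_0)})=0$. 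The Kronecker-product eigenvalue identity $\rho(A\otimes B)=\rho(A)\rho(B)$ then gives $\rho(S^{(i_0)})=0$. The corollary poses no real obstacle; it is essentially a repackaging of Theorems~\ref{thm3.1} and~\ref{thm3.2}, plus a one-line rank-one computation to cover the length-one case.
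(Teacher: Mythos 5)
Your proof is correct, but it takes a genuinely different route from the paper's. The paper gives a single uniform argument with no case analysis: since $(i_0,\dotsc,i_{n-1})$ is not $\mathbb{S}$-periodically extendable, the word $(i_0,\dotsc,i_{n-1},i_0)$ --- of length $n+1\ge2$, hence within the scope of Theorem~\ref{thm3.1} even when $n=1$ --- is not $\mathbb{S}$-admissible, so $S^{(i_0)}\dotsm S^{(i_{n-1})}S^{(i_0)}=0$; writing $A=S^{(i_0)}\dotsm S^{(i_{n-1})}$, this forces $A^2=0$ and therefore $\rho(A)=\sqrt{\rho(A^2)}=0$. That argument uses only Theorem~\ref{thm3.1} together with the identity $\rho(A)^2=\rho(A^2)$, and never touches Theorem~\ref{thm3.2}. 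Your version instead splits on admissibility: Theorem~\ref{thm3.1} in the inadmissible case, Theorem~\ref{thm3.2} (with the index in its statement read, correctly, as $s_{i_{n-1}i_0}$) in the admissible-but-unclosable case, and a separate rank-one computation for $n=1$, which Theorem~\ref{thm3.2} does not cover. All three pieces are sound: in particular your $n=1$ patch is right, since $\mathbb{S}^{(i_0)}=\bdelta_{i_0}^T\mathbb{S}_{i_0}$ has rank one and trace $s_{i_0i_0}=0$, hence spectral radius zero, and $\rho(A\otimes B)=\rho(A)\rho(B)$. What each approach buys: the paper's squaring trick is shorter, handles all $n\ge1$ at once, and avoids any reliance on Theorem~\ref{thm3.2}, whose statement carries a typographical index and is only extracted from the proof of a cited result; your decomposition makes more transparent why it is the spectral radius, rather than the matrix product itself, that vanishes in the admissible case, at the cost of an extra imported theorem and an edge case you must (and do) handle by hand.
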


\begin{proof}
Let $(i_0,\dotsc,i_{n-1})$ be not $\mathbb{S}$-periodically extendable. Then $(i_0,\dotsc,i_{n-1}, i_0)$ is not $\mathbb{S}$-admissible. Thus by Theorem~\ref{thm3.1}, we have
$S^{(i_0)}\dotsm S^{(i_{n-1})}S^{(i_0)}=0$. Then
\begin{equation*}
    \rho\left(S^{(i_0)}\dotsm S^{(i_{n-1})}\right)=\sqrt[2]{\rho\left(S^{(i_0)}\dotsm S^{(i_{n-1})}S^{(i_0)}\dotsm S^{(i_{n-1})}\right)}=0.
\end{equation*}
This completes the proof of Corollary~\ref{cor3.3}.
\end{proof}

For the proof of Theorem~\ref{thm3.1}, readers can see \cite[Lemma~1]{Koz-pre}. The statement of Theorem~\ref{thm3.2} is contained in the proof of Kozyakin~\cite[Theorem~1]{Koz-pre}.

\section{Stability of a matrix-valued topological Markovian chain}\label{sec4}
This section will be devoted to proving Theorems~\ref{thm1} and \ref{thm2} stated in Section~\ref{sec1.2} via proving Theorems~\ref{thm2.6} and \ref{thm2.7} stated in Section~\ref{sec2}, using Kozyakin's $\{0,1\}$-matrix lift approach developed for matrix-valued topological Markovian chains in \cite{Koz-pre}.

Let $\bS=\{S_1,\dotsc,S_K\}\subset\mathbb{R}^{d\times d}$ and $\mathbb{S}$ a $\{0,1\}$-matrix of $K\times K$, not necessarily irreducible, such that each row contains at least one $1$ as the transition sign matrix of the Markovian chain $\bxi$ in Section~\ref{sec1}. Let
\begin{equation*}
    \rho(\bS,\mathbb{S})=\limsup_{n\to\infty}\max_{(i_0,\dotsc,i_{n-1})\in W_{\textit{per}}^n(\mathbb{S})}\sqrt[n]{\rho(S_{i_0}\dotsm S_{i_{n-1}})}
\end{equation*}
and
\begin{equation*}
    \hat{\rho}(\bS,\mathbb{S})=\lim_{n\to\infty}\max_{(i_0,\dotsc,i_{n-1})\in W^n(\mathbb{S})}\sqrt[n]{\|S_{i_0}\dotsm S_{i_{n-1}}\|}
\end{equation*}
which are called the generalized and joint spectral radius of $\bS$ governed by $\mathbb{S}$, respectively.

Using the ergodic theory, an analog of the classical Berger-Wang formula~\cite{BW} is the following statement.

\begin{thm}[\cite{Dai-pre}]\label{thm4.1}
$\hat{\rho}(\bS,\mathbb{S})={\rho}(\bS,\mathbb{S})$.
\end{thm}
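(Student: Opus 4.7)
The plan is to reduce Theorem~\ref{thm4.1} to the classical Berger--Wang formula applied to the Kozyakin $\mathbb{S}$-lift $\mathbb{S}\otimes\bS=\{S^{(1)},\dotsc,S^{(K)}\}\subset\mathbb{R}^{Kd\times Kd}$, treated as an \emph{unconstrained} finite family of matrices. Writing
$$\hat{\rho}(\mathbb{S}\otimes\bS)=\lim_{n\to\infty}\max_{(i_0,\dotsc,i_{n-1})\in\{1,\dotsc,K\}^n}\sqrt[n]{\|S^{(i_0)}\dotsm S^{(i_{n-1})}\|}$$
and
$$\rho(\mathbb{S}\otimes\bS)=\limsup_{n\to\infty}\max_{(i_0,\dotsc,i_{n-1})\in\{1,\dotsc,K\}^n}\sqrt[n]{\rho(S^{(i_0)}\dotsm S^{(i_{n-1})})}$$
for the ordinary joint and generalized spectral radii of this free system, the classical Berger--Wang identity yields $\hat{\rho}(\mathbb{S}\otimes\bS)=\rho(\mathbb{S}\otimes\bS)$. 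Theorem~\ref{thm4.1} then reduces to the two identifications $\hat{\rho}(\bS,\mathbb{S})=\hat{\rho}(\mathbb{S}\otimes\bS)$ and $\rho(\bS,\mathbb{S})=\rho(\mathbb{S}\otimes\bS)$, both of which will be extracted from the results of Section~\ref{sec3}.

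For the joint spectral radii the core ingredient is an explicit description of the lifted products. A straightforward induction on $n$ using $S^{(k)}=\bdelta_k^T\mathbb{S}_k\otimes S_k$ shows that, viewed as a $K\times K$ array of $d\times d$ blocks, the matrix $S^{(i_0)}\dotsm S^{(i_{n-1})}$ has at most the $i_0$-th block-row nonzero, and its $(i_0,j)$-block equals $s_{i_0i_1}s_{i_1i_2}\dotsm s_{i_{n-2}i_{n-1}}\cdot s_{i_{n-1}j}\cdot S_{i_0}\dotsm S_{i_{n-1}}$. For an $\mathbb{S}$-admissible word this collapses to $s_{i_{n-1}j}S_{i_0}\dotsm S_{i_{n-1}}$, and since at least one and at most $K$ of the coefficients $s_{i_{n-1}j}$ equal~$1$, any submultiplicative norm yields bounds $c\|S_{i_0}\dotsm S_{i_{n-1}}\|\le\|S^{(i_0)}\dotsm S^{(i_{n-1})}\|\le C\|S_{i_0}\dotsm S_{i_{n-1}}\|$ with constants $0<c\le C<\infty$ depending only on $K,d$. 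Combined with Theorem~\ref{thm3.1}, which eliminates all non-admissible words from the maximum defining $\hat{\rho}(\mathbb{S}\otimes\bS)$, taking $n$-th roots and letting $n\to\infty$ gives $\hat{\rho}(\mathbb{S}\otimes\bS)=\hat{\rho}(\bS,\mathbb{S})$.

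The matching equality for generalized spectral radii is cleaner and follows almost formally from Theorem~\ref{thm3.2} together with Corollary~\ref{cor3.3}. If $(i_0,\dotsc,i_{n-1})$ is not $\mathbb{S}$-admissible then $\rho(S^{(i_0)}\dotsm S^{(i_{n-1})})=0$ by Theorem~\ref{thm3.1}; if it is admissible but not periodically extendable the same vanishing holds by Corollary~\ref{cor3.3}; and on $W_{\textit{per}}^n(\mathbb{S})$, Theorem~\ref{thm3.2} supplies $\rho(S^{(i_0)}\dotsm S^{(i_{n-1})})=\rho(S_{i_0}\dotsm S_{i_{n-1}})$. Hence for every $n$ the maximum defining $\rho(\mathbb{S}\otimes\bS)$ coincides with the one defining $\rho(\bS,\mathbb{S})$, and the $\limsup$s agree. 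The only genuinely delicate step in the whole argument is the block-product induction in the previous paragraph, where one must keep careful track of Kozyakin's row-versus-column convention for $\mathbb{S}^{(i)}$ so that the nontrivial block-row comes out to be $i_0$ rather than $i_{n-1}$; everything else is bookkeeping on top of the classical Berger--Wang theorem.
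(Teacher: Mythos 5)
Your argument is correct, but note that the paper does not actually prove Theorem~\ref{thm4.1}: it imports it from \cite{Dai-pre}, where it is established by ergodic-theoretic means on the subshift $\varSigma_{\mathbb{S}}^+$ (the paper says only that, ``based on the classical Berger--Wang formula, a matrix theory proof of this formula is available in \cite{Koz-pre}''). Your proof is precisely that alternative matrix-theoretic route of Kozyakin. The details check out: with the paper's convention $\mathbb{S}^{(i)}=\bdelta_i^T\mathbb{S}_i$ the rank-one factors telescope, since $\mathbb{S}_{i_k}\bdelta_{i_{k+1}}^T=s_{i_ki_{k+1}}$ is a scalar, to give
\begin{equation*}
S^{(i_0)}\dotsm S^{(i_{n-1})}=s_{i_0i_1}\dotsm s_{i_{n-2}i_{n-1}}\left(\bdelta_{i_0}^T\mathbb{S}_{i_{n-1}}\right)\otimes\left(S_{i_0}\dotsm S_{i_{n-1}}\right),
\end{equation*}
which is exactly the identity the paper itself uses in the proof of Theorem~\ref{thm2.7}; so the surviving block-row is indeed $i_0$, your two-sided norm comparison on admissible words holds with constants depending only on $K$ and the chosen norms, and these constants disappear under $n$-th roots, giving $\hat{\rho}(\mathbb{S}\otimes\bS)=\hat{\rho}(\bS,\mathbb{S})$. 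On the spectral-radius side, Theorem~\ref{thm3.1} and Corollary~\ref{cor3.3} annihilate the non-admissible and non-extendable words, and Theorem~\ref{thm3.2} (with the evident correction $s_{i_{n-1}i_0}$ for $s_{i_ni_0}$) identifies the remaining terms, so the two maxima agree for every $n$ with $W_{\textit{per}}^n(\mathbb{S})\neq\emptyset$ and the empty case contributes only zeros, which cannot raise the $\limsup$. The trade-off between the two proofs: yours is elementary modulo the classical Berger--Wang theorem \cite{BW} and reuses only the Section~\ref{sec3} machinery already set up in the paper, whereas the ergodic argument of \cite{Dai-pre} avoids the finite-dimensional lift altogether and is the one that generalizes to settings where no such lift exists.
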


This implies that for any given $\{0,1\}$-matrix $\mathbb{S}\in\mathbb{R}^{K\times K}$, ${\rho}(\bS,\mathbb{S})$ is continuous with respect to $\bS$ in $\stackrel{K\textrm{-folds}}{\overbrace{\mathbb{R}^{d\times d}\times\dotsm\times\mathbb{R}^{d\times d}}}$; see \cite[Corollary~1.5]{Dai-pre}. Based on the classical Berger-Wang formula~\cite{BW}, a matrix theory proof of this formula is available in \cite{Koz-pre}.

\subsection{Uniform exponential stability}
To prove Theorem~\ref{thm2.6}, we will need the following known sufficient and necessary condition for uniform exponential stability of $\bS$ governed by $\mathbb{S}$.

\begin{lemma}[\cite{Dai-LAA}]\label{lem4.2}
$\bS$ is uniformly exponentially stable governed by $\mathbb{S}$ if and only if ${\rho}(\bS,\mathbb{S})<1$.
\end{lemma}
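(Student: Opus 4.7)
The plan is to reduce Lemma~\ref{lem4.2} to the equivalence between uniform exponential stability and $\hat{\rho}(\bS,\mathbb{S})<1$ via Theorem~\ref{thm4.1}, and then dispatch that equivalence by a Fekete-type argument. First I would set $a_n := \max_{(i_0,\dotsc,i_{n-1})\in W^n(\mathbb{S})}\|S_{i_0}\dotsm S_{i_{n-1}}\|$ and observe that if $(i_0,\dotsc,i_{m+n-1})\in W^{m+n}(\mathbb{S})$, then its length-$m$ prefix and length-$n$ suffix are each $\mathbb{S}$-admissible, since their admissibility conditions form a subset of those for the full word. Submultiplicativity of the operator norm then yields $a_{m+n}\le a_m a_n$, and Fekete's lemma gives $\lim_{n\to\infty}a_n^{1/n}=\inf_n a_n^{1/n}=\hat{\rho}(\bS,\mathbb{S})$.

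Next I would handle the two directions separately. For the forward direction, uniform exponential stability immediately implies $a_n\le C\gamma^n$ with $\gamma<1$, hence $\hat{\rho}(\bS,\mathbb{S})\le\gamma<1$, and Theorem~\ref{thm4.1} converts this to $\rho(\bS,\mathbb{S})<1$. For the converse, starting from $\rho(\bS,\mathbb{S})<1$ and applying Theorem~\ref{thm4.1}, I would pick $\gamma$ with $\hat{\rho}(\bS,\mathbb{S})<\gamma<1$, use $a_n^{1/n}\to\hat{\rho}(\bS,\mathbb{S})$ to choose $N$ so that $a_n\le\gamma^n$ for all $n\ge N$, and absorb the finitely many short words $n<N$ into a single multiplicative constant, yielding $\|S_{i_0}\dotsm S_{i_{n-1}}\|\le C\gamma^n$ along every admissible trajectory.

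The main obstacle is not inside the sketch above: it is exactly what Theorem~\ref{thm4.1} already resolves. Without that Berger-Wang-type identity, one would have to bridge directly between spectral radii of periodic products in $W_{\textit{per}}^n(\mathbb{S})$ and norms of arbitrary admissible products in $W^n(\mathbb{S})$, which is the substantive step. Here that identification is invoked as a black box, so what remains is only the standard Fekete-submultiplicativity machinery adapted to the subshift of finite type $\varSigma_{\mathbb{S}}^+$, and I do not expect any genuine technical difficulty beyond routine bookkeeping of constants at the endpoints $n<N$.
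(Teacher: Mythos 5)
Your proof is correct, but be aware that the paper itself contains no proof of Lemma~\ref{lem4.2} to compare against: the statement is imported as a citation to \cite{Dai-LAA}. What you have written is a legitimate self-contained derivation modulo Theorem~\ref{thm4.1}, and it is the standard route. The submultiplicativity step is sound: prefixes and suffixes of $\mathbb{S}$-admissible words are again $\mathbb{S}$-admissible, so $a_{m+n}\le a_m a_n$ holds on the subshift and Fekete identifies $\hat{\rho}(\bS,\mathbb{S})$ with $\inf_n a_n^{1/n}$ (if some $a_N=0$ then $a_n=0$ for all $n\ge N$ and everything is trivial, so the degenerate case causes no harm). Two small refinements are worth recording. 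First, the forward direction does not need the full Berger--Wang identity: uniform exponential stability gives $\hat{\rho}(\bS,\mathbb{S})<1$, and since $W_{\textit{per}}^n(\mathbb{S})\subseteq W^n(\mathbb{S})$ and $\rho(A)\le\|A\|$, the elementary inequality $\rho(\bS,\mathbb{S})\le\hat{\rho}(\bS,\mathbb{S})$ already yields $\rho(\bS,\mathbb{S})<1$; Theorem~\ref{thm4.1} is genuinely needed only for the converse, exactly as you diagnose. Second, the forward direction silently uses that every finite $\mathbb{S}$-admissible word extends to an element of $\varSigma_{\mathbb{S}}^+$, so that the uniform bound over infinite trajectories controls $a_n$; this is guaranteed by the standing assumption that each row of $\mathbb{S}$ contains at least one entry $1$, but it should be stated. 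With those two sentences added, your argument is a complete proof of the cited lemma within the framework of this paper.
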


We will need another known sufficient and necessary condition of stability:
\begin{lemma}[\cite{SWP,Dai-LAA}]\label{lem4.3}
Let $\A=\{A_1,\dotsc,A_K\}\subset\mathbb{R}^{N\times N}$ be arbitrarily given. Then there are constants $C>0$ and $\gamma<1$ such that
\begin{equation*}
\|A_{i_0}\dotsm A_{i_{n-1}}\|\le C\gamma^n\quad\forall n\ge1\textrm{ and }(i_0,\dotsc,i_{n-1})\in\{1,\dotsc,K\}^n,
\end{equation*}
if and only if one can find $\beta<1$ and $M>0$ so that
\begin{equation*}
    \rho(A_{i_0}\dotsm A_{i_{n-1}})\le\beta\quad\forall n\ge M\textrm{ and }(i_0,\dotsc,i_{n-1})\in\{1,\dotsc,K\}^n.
\end{equation*}
\end{lemma}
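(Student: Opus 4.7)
Since $\rho(B)\le\|B\|$ for any submultiplicative matrix norm, the hypothesis $\|A_{i_0}\dotsm A_{i_{n-1}}\|\le C\gamma^n$ gives $\rho(A_{i_0}\dotsm A_{i_{n-1}})\le C\gamma^n$, which drops below any prescribed $\beta<1$ once $n$ is at least some $M$ depending on $C,\gamma,\beta$; this direction requires nothing beyond the spectral-radius--norm inequality.

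\textbf{Backward direction.} The central tool is the free-case Berger--Wang identity, $\hat{\rho}(\A)=\limsup_{n\to\infty}\max_{|w|=n}\rho(A_w)^{1/n}$, obtained as the specialization of Theorem~\ref{thm4.1} to the case where $\mathbb{S}$ is the all-ones matrix. My plan is first to pass to the $M$-fold block lift $\A^{(M)}:=\{A_{i_0}\dotsm A_{i_{M-1}}:(i_0,\dotsc,i_{M-1})\in\{1,\dotsc,K\}^M\}$, a finite family of $K^M$ matrices. Any non-empty product of elements of $\A^{(M)}$, re-expanded as a word in $\A$, has length at least $M$; hence by hypothesis \emph{every} non-empty finite product of elements of $\A^{(M)}$ has spectral radius at most $\beta$. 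Combined with $\hat{\rho}(\A^{(M)})=\hat{\rho}(\A)^M$, the task reduces to proving $\hat{\rho}(\A^{(M)})<1$ for a finite family satisfying this uniform spectral-radius bound. For that reduced problem I would use an Elsner-type block-triangular reduction to the irreducible case, then invoke Barabanov's extremal norm $|\cdot|_\ast$ with $\max_{B\in\A^{(M)}}|B|_\ast=\hat{\rho}(\A^{(M)})$. Assuming for contradiction that $\hat{\rho}(\A^{(M)})\ge 1$, a compactness and recurrence argument on the unit sphere of $|\cdot|_\ast$, made possible by the finiteness of $\A^{(M)}$, should produce a sequence of finite products $B_{w_k}$ with $\rho(B_{w_k})\to 1$, contradicting the uniform bound $\rho\le\beta<1$; a standard Rota--Strang step then yields the desired $\|A_{i_0}\dotsm A_{i_{n-1}}\|\le C\gamma^n$ with $\gamma<1$.

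The principal obstacle is this last strict-inequality step: a direct application of Berger--Wang to $\A^{(M)}$ yields only $\hat{\rho}(\A^{(M)})\le 1$, since $\beta^{1/k}\to 1$ as the product length $k$ grows, so the gap between the soft upper bound $\le 1$ and the strict bound $<1$ must be closed by hand. Finiteness of $\A^{(M)}$ and the uniformity of the hypothesis (controlling \emph{all} long products, not only periodic ones) are essential here, allowing one to bypass the well-known failure of the finiteness conjecture for joint spectral radii; this is the content of \cite{SWP,Dai-LAA}, whose detailed arguments I would follow.
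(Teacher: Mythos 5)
The paper does not prove this lemma at all: it is imported verbatim as a known result from \cite{SWP} and \cite{Dai-LAA}, so there is no internal proof to compare against. Judged on its own terms, your reconstruction follows the same route those references take. The forward direction via $\rho(B)\le\|B\|$ is correct and complete. For the converse, the block lift to $\A^{(M)}$, the identity $\hat{\rho}(\A^{(M)})=\hat{\rho}(\A)^{M}$, and the reduction to showing $\hat{\rho}<1$ for a finite family all of whose nonempty products have spectral radius at most $\beta$ are all sound, and you correctly isolate the only nontrivial point: Berger--Wang \cite{BW} gives merely $\hat{\rho}\le\limsup_k\beta^{1/k}=1$, and the case $\hat{\rho}=1$ must be excluded separately. (Note that the case $\hat{\rho}>1$ is already killed by Berger--Wang, since then $\rho(A_w)\ge(\hat{\rho}-\varepsilon)^{|w|}\to\infty$ along some sequence of words.)

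The one place your sketch is genuinely soft is the mechanism you propose for excluding $\hat{\rho}=1$. A recurrence argument on the unit sphere of the Barabanov norm \cite{Bar} produces a product $B=A_{w}$ with $\|B\|_{*}\le1$ and $\|Bx-x\|_{*}<\varepsilon$ for some unit vector $x$, but this does \emph{not} by itself force $\rho(B)$ close to $1$: the telescoping estimate gives only $\|B^{k}\|_{*}\ge1-k\varepsilon$, which is vacuous for $k>1/\varepsilon$, while $\rho(B)=\inf_{k}\|B^{k}\|_{*}^{1/k}$ sees all $k$. The counterexamples to the finiteness conjecture show exactly how delicate this is: families with $\hat{\rho}=1$ need not contain any finite product of spectral radius $1$, so the lemma's content is the weaker (but still nontrivial) assertion that $\sup_{|w|\ge M}\rho(A_w)$ cannot be bounded away from $1$. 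The standard way to close this gap, and essentially what \cite{SWP} and \cite{Dai-LAA} do, is: first use Elsner's reduction theorem \cite{Els} and induction on the dimension to reduce to the case where the generated semigroup $\varSigma$ is bounded; then the closure $\overline{\varSigma}$ is a compact multiplicative semigroup which, if $\hat{\rho}=1$, contains a nonzero idempotent $E$ (hence $\rho(E)=1$); finally, since the spectral radius is a \emph{continuous} function of the matrix entries, any sequence of genuine products $A_{w_k}\to E$ with $|w_k|\to\infty$ satisfies $\rho(A_{w_k})\to1$, contradicting the uniform bound $\beta<1$. If you replace your ``compactness and recurrence on the unit sphere'' step by this continuity-plus-idempotent argument (or cite it explicitly), the proof is complete; as written, that step is the one assertion that does not yet follow from what precedes it.
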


Now we are ready to prove Theorem~\ref{thm2.6} by using the Kozyakin lift of a matrix-valued topological Markovian chain introduced in Section~\ref{sec3}
 and the results stated above.

\begin{proof}[Proof of Theorem~\ref{thm2.6}]
By the definition of the uniform exponential stability, it is obvious that $(1)\Rightarrow(2)\Rightarrow(3)$. Thus we need only prove the statement that (3)$\Rightarrow$(1).

Let there be given constants $0\le\beta<1$ and $M>2$ such that for each $n>M$, we have that
$\rho(S_{i_0}\dotsm S_{i_{n-1}})\le\beta$ for all $(i_0,\dotsc,i_{n-1})\in W_{\textit{per}}^n(\mathbb{S})$. Then from Corollary~\ref{cor3.3}, Theorems~\ref{thm3.1} and \ref{thm3.2}, it follows that
\begin{equation*}
    \rho\left(S^{(i_0)}\dotsm S^{(i_{n-1})}\right)\le\beta\quad\forall n\ge M\textrm{ and }(i_0,\dotsc,i_{n-1})\in\{1,\dotsc,K\}^n.
\end{equation*}
Therefore by Lemma~\ref{lem4.3}, there are constants $C>0$ and $\gamma<1$ such that
\begin{equation*}
\|S^{(i_0)}\dotsm S^{(i_{n-1})}\|\le C\gamma^n\quad\forall n\ge1\textrm{ and }(i_0,\dotsc,i_{n-1})\in\{1,\dotsc,K\}^n.
\end{equation*}
Next by Corollary~\ref{cor3.3}, Theorems~\ref{thm3.1} and \ref{thm3.2} again, we see that
\begin{equation*}
    \rho(S_{i_0}\dotsm S_{i_{n-1}})\le C\gamma^n\quad\forall n\ge2\textrm{ and }(i_0,\dotsc,i_{n-1})\in W_{\textit{per}}^n(\mathbb{S}).
\end{equation*}
Hence we have ${\rho}(\bS,\mathbb{S})=\hat{\rho}(\bS,\mathbb{S})\le\gamma$ by Theorem~\ref{thm4.1}.

This thus completes the proof of Theorem~\ref{thm2.6} from Lemma~\ref{lem4.2}.
\end{proof}
\subsection{Periodical stability implies almost sure exponential stability}
We now additionally let the $\{0,1\}$-matrix $\mathbb{S}$ be irreducible. We can choose a transition probability matrix $\bP=(p_{ij})$ of $K\times K$ such that its sign matrix is just $\mathbb{S}$. Then by the Perron-Frobenius theorem, one can find a probability vector $\p=(p_1,\dotsc,p_K)$ such that
\begin{equation*}
\p\bP=\p\quad \textrm{and}\quad p_k>0\textrm{ for each }1\le k\le K.
\end{equation*}
We define a canonical Markovian probability measure $\mu_{\p,\bP}$ on the full symbolic sequence space
$$\varSigma_K^+=\{1,\dotsc,K\}^{\mathbb{Z}_+}$$
as follows:
$$
\mu_{\p,\bP}([j_0,\dotsc,j_{m-1}])=p_{i_0}p_{j_0j_1}\dotsm p_{j_{m-2}j_{m-1}}
$$
for all cylinder sets $[j_0,\dotsc,j_{m-1}]=\{(i_n)_{n=0}^\infty\in\varSigma_K^+\,|\,i_0=j_0,\dotsc,i_{m-1}=j_{m-1}\}$. In fact, here $\mu_{\p,\bP}$ is ergodic since $\bP$ is irreducible.

Let $\mathbb{P}^{\pi_{\bxi}}$ be the probability distribution of $\pi_{\bxi}$ on $(\Omega,\mathscr{F},\mathbb{P})$ valued in $\varSigma_K^+$ as in Lemma~\ref{lem2.3}. The following equivalence is useful.

\begin{lemma}\label{lem4.4}
$\mathbb{P}^{\pi_{\bxi}}$ is equivalent to $\mu_{\p,\bP}$; that is, $\mathbb{P}^{\pi_{\bxi}}(B)=0$ if and only if $\mu_{\p,\bP}(B)=0$, for any Borel subset $B\subseteq\varSigma_K^+$.
\end{lemma}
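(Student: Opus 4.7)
My plan is to show that the two measures are equivalent by first proving they assign zero measure to exactly the same cylinder sets, and then extending this coincidence from cylinders to the full Borel $\sigma$-algebra.

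The first step is a direct Markov computation. For an $m$-length cylinder $[j_0,\dotsc,j_{m-1}]$, the Markov property of $\bxi$ with initial vector $\p^{(0)}$ and transition matrices $\bP(n)$ gives
\begin{equation*}
\mathbb{P}^{\pi_{\bxi}}([j_0,\dotsc,j_{m-1}])=p_{j_0}^{(0)}\prod_{k=0}^{m-2}p_{j_kj_{k+1}}(k),
\end{equation*}
while by construction
\begin{equation*}
\mu_{\p,\bP}([j_0,\dotsc,j_{m-1}])=p_{j_0}\prod_{k=0}^{m-2}p_{j_kj_{k+1}}.
\end{equation*}
Since $\bP$ has sign matrix $\mathbb{S}$, since $\bxi$ has the constant sign matrix $\mathbb{S}$, and since both $\p$ (by Perron-Frobenius) and $\p^{(0)}$ (by hypothesis) have all strictly positive entries, each of the two products is strictly positive precisely when $(j_0,\dotsc,j_{m-1})$ is $\mathbb{S}$-admissible. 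Hence both measures vanish on the same cylinders, and combining this observation with Lemma~\ref{lem2.3} both are carried by $\varSigma_{\mathbb{S}}^+$.

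To pass from cylinders to arbitrary Borel sets I would quantify the comparison through the cylinder-level ratio $\mu_{\p,\bP}([j_0,\dotsc,j_{m-1}])/\mathbb{P}^{\pi_{\bxi}}([j_0,\dotsc,j_{m-1}])$. Provided this ratio remains in a bounded interval $[1/M,M]$ uniformly over all $\mathbb{S}$-admissible cylinders, outer regularity of Borel probability measures on the compact metric space $\varSigma_K^+$ immediately transfers a cylinder cover of vanishingly small $\mathbb{P}^{\pi_{\bxi}}$-mass into one of vanishingly small $\mu_{\p,\bP}$-mass, and symmetrically. Inspecting the ratio reduces the required uniform bound to a uniform positive lower bound on the nonzero entries of the family $\{\bP(n)\}_{n\ge0}$.

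The main obstacle is exactly this quantitative step: without such a uniform positivity assumption on the nonzero $p_{ij}(n)$, the cylinder ratio above can blow up as $m\to\infty$ and the cylinder-level coincidence of null sets does not automatically transfer. The fallback plan is to introduce the likelihood-ratio martingale $Z_m=d(\mu_{\p,\bP}|\mathscr{F}_m)/d(\mathbb{P}^{\pi_{\bxi}}|\mathscr{F}_m)$ on the $\sigma$-algebra generated by the first $m$ coordinates, to verify that $\{Z_m\}_{m\ge1}$ is a nonnegative $\mathbb{P}^{\pi_{\bxi}}$-martingale, and then to invoke a Kakutani-type dichotomy for Markov measures sharing the same sign pattern to obtain uniform integrability together with strict positivity of the almost sure limit $Z_\infty$. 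This would yield $\mu_{\p,\bP}\ll\mathbb{P}^{\pi_{\bxi}}$, and the symmetric argument completes the two-sided equivalence.
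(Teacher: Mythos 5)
Your first paragraph reproduces, almost verbatim, the entirety of the paper's own proof: the authors compute $\mathbb{P}^{\pi_{\bxi}}$ and $\mu_{\p,\bP}$ on cylinders, observe that the two products have the same sign pattern, and then simply declare that ``we need only to check'' cylinders. So up to that point you and the paper agree. The difference is that you correctly refuse to accept the cylinder-to-Borel reduction as automatic, and this is where your proposal (admittedly) stalls. Unfortunately, neither of your two proposed completions can succeed under the paper's hypotheses. Your first route needs the cylinder ratio $\mu_{\p,\bP}(C)/\mathbb{P}^{\pi_{\bxi}}(C)$ to be bounded uniformly over all admissible cylinders, but this ratio is a product of $m-1$ factors $p_{j_kj_{k+1}}/p_{j_kj_{k+1}}(k)$, so it is unbounded in $m$ even when the nonzero $p_{ij}(n)$ admit a uniform positive lower bound (take $p_{11}(n)=0.5$ for all $n$ and $p_{11}=0.6$); a uniform positivity assumption does not repair this. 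Your second route, the Kakutani-type dichotomy, only delivers ``equivalent or mutually singular,'' and the singular alternative genuinely occurs here.

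Indeed, the lemma is false as stated, so no completion of your argument is possible. Take $K=2$, $\mathbb{S}$ the all-ones $2\times2$ matrix, $\p^{(0)}=(1/2,1/2)$, and let both rows of $\bP(n)$ equal $(1-2^{-n-1},\,2^{-n-1})$; all entries are strictly positive, so the transition sign matrix is constant and irreducible. Then $\mathbb{P}(\{\xi_n=S_2\})=2^{-n}$ is summable, so by Borel--Cantelli the Borel set $B=\{(i_n)_{n\ge0}\,:\,i_n=2\text{ for infinitely many }n\}$ satisfies $\mathbb{P}^{\pi_{\bxi}}(B)=0$, whereas the stationary ergodic measure $\mu_{\p,\bP}$ (for any admissible choice of $\bP$) gives $\mu_{\p,\bP}(B)=1$: the two measures are mutually singular despite vanishing on exactly the same cylinders. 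The honest conclusion is that your proposal contains a genuine gap --- it does not prove the lemma --- but that gap is precisely the step the paper's own proof skips without justification; the lemma would need an additional hypothesis (for instance $\bP(n)$ converging to $\bP$ fast enough for a Kakutani-type criterion, or simply $\bP(n)\equiv\bP$) before either your argument or the paper's could be completed. Note also that what the proof of Theorem~\ref{thm2.7} actually uses is absolute continuity of $\mathbb{P}^{\pi_{\bxi}}$ with respect to $\mu_{\p,\bP}$ on a tail event, which the cylinder computation alone likewise does not supply.
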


\begin{proof}
We need only to check that $\mathbb{P}^{\pi_{\bxi}}([i_0,\dotsc,i_{n-1}])=0$ if and only if $\mu_{\p,\bP}([i_0,\dotsc,i_{n-1}])=0$, for any $[i_0,\dotsc,i_{n-1}]\subseteq\varSigma_K^+$.

Let $\p^{(0)}=\left(p_1^{(0)},\dotsc,p_K^{(0)}\right)$ be the irreducible initial probability distribution of $\bxi$. Noting that
\begin{equation*}\begin{split}
\mathbb{P}^{\pi_{\bxi}}([i_0,\dotsc,i_{n-1}])&=\mathbb{P}(\{\xi_0=S_{i_0},\dotsc,\xi_{n-1}=S_{i_{n-1}}\})\\
&=p_{i_0}^{(0)}p_{i_0i_1}(0)\dotsm p_{i_{n-2}i_{n-1}}(n-2)
\end{split}\end{equation*}
and
\begin{equation*}
\mu_{\p,\bP}([i_0,\dotsc,i_{n-1}])=p_{i_0}p_{i_0i_1}\dotsm p_{i_{n-2}i_{n-1}},
\end{equation*}
from $\mathrm{sign}(p_k)=\mathrm{sign}(p_k^{(0)})=1$ for all $1\le k\le K$ and
$$\mathrm{sign}(p_{i_0i_1}(0))=\mathrm{sign}(p_{i_0i_1}),\dotsc,\mathrm{sign}(p_{i_{n-2}i_{n-1}}(n-2))=\mathrm{sign}(p_{i_{n-2}i_{n-1}})$$
the statement follows immediately.

This completes the proof of Lemma~\ref{lem4.4}.
\end{proof}

\begin{lemma}[\cite{DHX}]\label{lem4.5}
Let $\A=\{A_1,\dotsc,A_K\}\subset\mathbb{R}^{N\times N}$ be arbitrarily given. If $\rho(A_{i_0}\dotsm A_{i_{n-1}})<1$ for all words $(i_0,\dotsc,i_{n-1})\in\{1,\dotsc,K\}^n$ and $n\ge1$, then
\begin{equation*}
\lim_{n\to\infty}\frac{1}{n}\log\|A_{i_0}\dotsm A_{i_{n-1}}\|<0
\end{equation*}
for $\mu_{\p,\bP}$-a.e. $(i_n)_{n=0}^\infty\in\varSigma_K^+$.
\end{lemma}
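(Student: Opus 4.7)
The plan is to apply Kingman's subadditive ergodic theorem together with the classical Berger--Wang formula to first establish existence of the top Lyapunov exponent as a deterministic constant $\lambda\le 0$, and then to upgrade to strict negativity $\lambda<0$ via a contradiction against the strict periodic-spectral-radius hypothesis. The upper bound is routine; the essential difficulty is ruling out the boundary case $\hat{\rho}(\A)=1$.

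First, since $\bP$ is irreducible with stationary distribution $\p$, the Markov measure $\mu_{\p,\bP}$ is ergodic under the left shift $\theta_+$ on $\varSigma_K^+$. Putting $f_n(x):=\log\|A_{i_0}\dotsm A_{i_{n-1}}\|$ for $x=(i_k)_{k\ge 0}$, the family $(f_n)$ is a subadditive cocycle by submultiplicativity of the operator norm, and $f_n^+\in L^1(\mu_{\p,\bP})$ trivially because $\A$ is finite. Kingman's theorem yields a $\mu_{\p,\bP}$-a.e.\ limit $\lambda:=\lim_n\tfrac{1}{n}f_n$, which is a constant by ergodicity. Moreover, the classical Berger--Wang formula (the fullshift specialisation of Theorem~\ref{thm4.1}, where $\mathbb{S}$ is the all-ones matrix) gives $\hat{\rho}(\A)=\rho(\A)$. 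The hypothesis $\rho(A_{i_0}\dotsm A_{i_{n-1}})<1$ for all finite words immediately yields $\rho(\A)\le 1$, whence $\hat{\rho}(\A)\le 1$, and consequently $\lambda\le\log\hat{\rho}(\A)\le 0$.

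To force strict inequality, observe first that if $\hat{\rho}(\A)<1$ then Lemma~\ref{lem4.3} delivers full uniform exponential stability and $\lambda<0$ drops out for free. So it suffices to treat the borderline case $\hat{\rho}(\A)=1$, and we proceed by contradiction: assume $\lambda=0$. Applying the Oseledets multiplicative ergodic theorem to the cocycle $(n,x)\mapsto A_{i_{n-1}}\dotsm A_{i_0}$ over $(\varSigma_K^+,\mu_{\p,\bP},\theta_+)$, one obtains a measurable top Oseledets subspace along which the expansion rate equals $0$. Using Poincar\'e recurrence in the shift together with a Katok-type periodic orbit approximation of the ergodic Markov measure $\mu_{\p,\bP}$, we extract a sequence of periodic words $w_n$ (of periods tending to infinity) whose normalized logarithmic periodic exponents satisfy $\sqrt[|w_n|]{\rho(A_{w_n})}\to 1$. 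Since the alphabet is finite and the matrices in $\A$ are fixed, a compactness/pigeonhole argument on the blocks $w_n$ then yields a single periodic word $w^\ast$ with $\rho(A_{w^\ast})\ge 1$, directly contradicting the hypothesis.

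The main obstacle is precisely this final extraction step: the hypothesis provides $\rho(A_w)<1$ for each word individually but not uniformly, so producing an actual periodic word $w^\ast$ with $\rho(A_{w^\ast})\ge 1$ from the asymptotic statement $\sqrt[|w_n|]{\rho(A_{w_n})}\to 1$ demands a delicate combinatorial/compactness argument tailored to the Markov shift. The standard tools for unconstrained matrix products, namely Barabanov norms and Elsner's reduction theorem, are unavailable in the Markovian setting, so the argument must route through Oseledets structure together with symbolic recurrence; a backup route, should the direct extraction fail, is a dimension induction using common invariant proper subspaces of $\A$, reducing to the irreducible case where Furstenberg-type non-contraction forces an incompatibility with $\lambda=0$.
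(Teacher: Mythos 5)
The paper itself does not prove Lemma~\ref{lem4.5}; it imports it from \cite{DHX}. Measured against that result, your argument is fine up to and including $\lambda\le\log\hat{\rho}(\A)\le 0$, and you correctly isolate the borderline case $\hat{\rho}(\A)=1$ as the only hard one. But the step you flag as the ``main obstacle'' is not merely delicate --- it is provably impossible. Extracting a single periodic word $w^\ast$ with $\rho(A_{w^\ast})\ge 1$ from a sequence of words $w_n$ with $\sqrt[|w_n|]{\rho(A_{w_n})}\to 1$ is exactly the finiteness property of the joint spectral radius, and that property is known to fail: there are pairs of $2\times 2$ matrices (Bousch--Mairesse, Blondel--Theys--Vladimirov, Hare--Morris--Sidorov--Theys) with $\hat{\rho}(\A)=1$ and $\rho(A_w)<1$ for \emph{every} finite word $w$. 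Such a family satisfies the hypothesis of the lemma and, via the Berger--Wang formula, supplies precisely the input your extraction step consumes, yet no $w^\ast$ exists. Since your extraction uses nothing about the $w_n$ beyond $\sqrt[|w_n|]{\rho(A_{w_n})}\to 1$ --- in particular nothing tying them to the fully supported, positive-entropy measure $\mu_{\p,\bP}$ --- the contradiction cannot be reached along this route. In those counterexamples the norm-maximizing words are Sturmian and carry a zero-entropy measure; the entire content of the lemma is that a fully supported Markov measure cannot be maximizing, and that is exactly the information your argument discards.

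A secondary misconception: you assert that Barabanov norms are unavailable here. That is true for the constrained (Markovian-product) theorems of the paper, but Lemma~\ref{lem4.5} concerns \emph{unconstrained} products over the full alphabet; the Markov structure enters only through the measure, not through a restriction on admissible words. The workable route --- essentially your ``backup'' --- is induction on the dimension $N$: if $\A$ has a common invariant proper subspace, block-triangularize, observe that each diagonal-block family is again periodically stable and that the exponent of the cocycle equals the maximum of the block exponents $\mu_{\p,\bP}$-a.e., and induct; if $\A$ is irreducible and $\hat{\rho}(\A)=1$, take a Barabanov norm $\|\cdot\|_*$, so that $f_n:=\log\|A_{i_{n-1}}\dotsm A_{i_0}\|_*\le 0$. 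If $\lambda(\mu_{\p,\bP})=0$, then Kingman's identity $\lambda=\inf_n\frac{1}{n}\int f_n\,d\mu_{\p,\bP}$ together with $f_n\le 0$ forces $f_n=0$ a.e.; since $f_n$ is locally constant and every cylinder along a $\bP$-admissible word has positive measure, $\|A_{i_0}\dotsm A_{i_{n-1}}\|_*=1$ for every such word. Taking any $\bP$-admissible cycle $v$ and iterating it gives $\|(A_v)^m\|_*=1$ for all $m$, while the hypothesis $\rho(A_v)<1$ forces $\|(A_v)^m\|_*\to 0$ --- a contradiction reached with no Oseledets theory, recurrence, or periodic-orbit approximation. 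I recommend replacing your extraction step with this argument (or simply citing \cite{DHX}).
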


We note here that even if $\bP$ is not irreducible, the statement of lemma~\ref{lem4.5} also holds (cf.~\cite[Proposition~2.5]{DHX-IEEE}).

We can now prove Theorem~\ref{thm2.7} by using Kozyakin's lift of a matrix-valued topological Markovian chain and the above two lemmas.

\begin{proof}[Proof of Theorem~\ref{thm2.7}]
According to Lemma~\ref{lem4.4}, it is sufficient to prove that $\bS$ is exponentially stable $\mu_{\p,\bP}$-almost surely.

From Corollary~\ref{cor3.3}, Theorems~\ref{thm3.1} and \ref{thm3.2}, it follows that
\begin{equation*}
    \rho\left(S^{(i_0)}\dotsm S^{(i_{n-1})}\right)<1\quad\forall n\ge 1\textrm{ and }(i_0,\dotsc,i_{n-1})\in\{1,\dotsc,K\}^n.
\end{equation*}
Then by Lemma~\ref{lem4.5}, it follows that $\mathbb{S}\otimes\bS$ is exponentially stable $\mu_{\p,\bP}$-almost surely. Since
$$
S^{(i_0)}\dotsm S^{(i_{n-1})}=s_{i_0i_1}\cdot\dotsm\cdot s_{i_{n-2}i_{n-1}}\left(\bdelta_{i_0}^T\mathbb{S}^{(i_{n-1})}\right)\otimes(S_{i_0}\dotsm S_{i_{n-1}})
$$
and $\bdelta_{i_0}^T\mathbb{S}_{i_{n-1}}\not=0\in\mathbb{R}^{K\times K}$ for the ${i_{n-1}}^{\textrm{th}}$-row of $\mathbb{S}$ contains at least one entry $1$, it holds that
\begin{equation*}\begin{split}
\|S^{(i_0)}\dotsm S^{(i_{n-1})}\|&\ge\|\left(\bdelta_{i_0}^T\mathbb{S}_{i_{n-1}}\right)\otimes(S_{i_0}\dotsm S_{i_{n-1}})\|\\
&\ge\|S_{i_0}\dotsm S_{i_{n-1}}\|.
\end{split}\end{equation*}
Hence $\bS$ is exponentially stable $\mu_{\p,\bP}$-almost surely.

This completes the proof of Theorem~\ref{thm2.7}.
\end{proof}

Therefore, we have proved our Theorems~\ref{thm1} and \ref{thm2} stated in Section~\ref{sec1.2}.

\section{Concluding remarks}\label{sec5}
In this note, we have studied the uniform and a.e. exponential stability of a nonhomogeneous Markovian chain $\bxi=(\xi_n)_{n\ge0}$ defined on a probability space $(\Omega,\mathscr{F},\mathbb{P})$ valued in a finite set $\bS=\{S_1,\dotsc,S_K\}$ of $d\times d$ matrices. Although the transition probability matrices are not constant and hence $\bxi$ is not necessarily to be stationary, yet if they have the same transition sign matrix we have shown the following two statements:
\begin{itemize}
\item $\bxi$ is uniformly exponentially stable if and only if it is completely periodically stable; i.e., there exists a constant $\gamma<1$ such that for any $n\ge1$, $\rho(S_{i_0}\dotsm S_{i_{n-1}})\le\gamma$ for all non-ignorable closed sample paths $(S_{i_0},\dotsc,S_{i_{n-1}})\in\bS^n$ of $\bxi$.

\item Irreducible $\bxi$ is exponentially stable $\mathbb{P}$-almost surely if it is periodically stable; i.e., for any $n\ge1$, $\rho(S_{i_0}\dotsm S_{i_{n-1}})<1$ for all non-ignorable closed sample paths $(S_{i_0},\dotsc,S_{i_{n-1}})\in\bS^n$ of $\bxi$.
\end{itemize}
These statements provide us characterizations of exponential stability of nonstationary matrix-valued Markovian chains. By Theorems~\ref{thm2.6} and \ref{thm2.7}, we see that the stability of $\bxi$ does not depend on
the explicit values of the transition probability matrices $\bP(n)$, but it depends only upon its transition sign matrix $\mathbb{S}$ of $\bxi$.

Finally we conclude this note with the following open problem for our further researching:

\begin{que}
Let $\bxi$ be exponentially stable $\mathbb{P}$-almost surely. Does it holds that for any $n\ge1$ and all non-ignorable \uwave{closed} sample paths $(S_{i_0},\dotsc,S_{i_{n-1}})\in\bS^n$ of $\bxi$, $\rho(S_{i_0}\dotsm S_{i_{n-1}})<1$?
\end{que}

We note here that if we abusedly require more: $\rho(S_{i_0}\dotsm S_{i_{n-1}})<1$ for all non-ignorable sample paths $(S_{i_0},\dotsc,S_{i_{n-1}})\in\bS^n$ of $\bxi$, then the statement is not necessarily to be true as shown by the example considered in Section~\ref{sec1.1}.
\section*{\textbf{Acknowledgments}}%
The authors would like to thank Professor Victor~Kozyakin for many helpful discussion.

This publication was made possible by NPRP grant $\#$[4-1162-1-181] from the Qatar National Research Fund (a member of Qatar Foundation). The statements made herein are solely the responsibility of the authors.

Dai was supported partly by National Natural Science Foundation of China (No. 11271183) and PAPD of Jiangsu Higher Education Institutions. Y.~Huang was supported partly by National Natural Science Foundation of China (No. 11371380).


\bibliographystyle{amsplain}

\end{document}